\numberwithin{equation}{section}
\newtheorem{theorem}{Theorem}[section]
\newtheorem{proposition}[theorem]{Proposition}
\newtheorem{lemma}[theorem]{Lemma}
\newtheorem{claim}[theorem]{Claim}
\newtheorem{conjecture}[theorem]{Conjecture}
\theoremstyle{definition}
\theoremstyle{remark}
\newcommand{\dif}{\mathop{}\!\mathrm{d}}
\title[Two conjectures in spectral graph theory]{Two conjectures in spectral graph theory involving the linear combinations of graph eigenvalues}
\author[L. Liu]{Lele Liu}
\thanks{
This author is supported by the National Natural Science Foundation of China (No. 12001370).
}
\address{College of Science, University of Shanghai for Science and Technology, Shanghai 200093, China}
\email{ahhylau@outlook.com}
\begin{document}

\begin{abstract}
We prove two conjectures in spectral extremal graph theory involving the linear combinations of graph eigenvalues.
Let $\lambda_1(G)$ be the largest eigenvalue of the adjacency matrix of a graph $G$, and $\overline{G}$ be the 
complement of $G$. A nice conjecture states that the graph on $n$ vertices maximizing $\lambda_1(G) + \lambda_1(\overline{G})$ 
is the join of a clique and an independent set, with $\lfloor n/3\rfloor$ and $\lceil 2n/3\rceil$ (also $\lceil n/3\rceil$ 
and $\lfloor 2n/3\rfloor$ if $n \equiv 2 \pmod{3}$) vertices, respectively. We resolve this conjecture for 
sufficiently large $n$ using analytic methods. Our second result concerns the $Q$-spread $s_Q(G)$ of a graph 
$G$, which is defined as the difference between the largest eigenvalue and least eigenvalue of the signless 
Laplacian of $G$. It was conjectured by Cvetkovi\'c, Rowlinson and Simi\'c in $2007$ that the unique 
$n$-vertex connected graph of maximum $Q$-spread is the graph formed by adding a pendant edge to $K_{n-1}$. 
We confirm this conjecture for sufficiently large $n$.
\end{abstract}

\keywords{Nordhaus--Gaddum inequality; Spectral radius; $Q$-spread; Graphon.} 
\subjclass[2010]{05C50; 15A18}

\maketitle

\section{Introduction}

Spectral extremal graph theory seeks to maximize or minimize some function of graph eigenvalues or eigenvectors over 
a given family of graphs. In this paper we focus on two problems in spectral extremal graph theory. Before the 
statement of the results, we introduce some notation and definitions. Consider a simple graph $G = (V,E)$ with 
vertex set $V = \{v_1,v_2,\ldots,v_n\}$. The \emph{adjacency matrix} of $G$ is defined to be a matrix $A(G)=[a_{ij}]$ 
of order $n$, where $a_{ij}=1$ if $v_i$ is adjacent to $v_j$, and $a_{ij}=0$ otherwise. The \emph{signless Laplacian} 
of $G$ is defined by $Q(G) := D(G) + A(G)$, where $D(G)$ is the diagonal matrix whose entries are the degrees of the 
vertices of $G$. The eigenvalues of $A(G)$ and $Q(G)$ are denoted by $\lambda_1(G)\geq \cdots\geq\lambda_n(G)$ and 
$q_1(G)\geq \cdots\geq q_n(G)$, respectively. Recall that a \emph{complete split graph} with parameters $n$, 
$\omega$ $(\omega\leq n)$, denoted by $CS_{n,\omega}$, is the graph on $n$ vertices obtain from a clique on 
$\omega$ vertices and an independent set on the remaining $n-\omega$ vertices in which each vertex of the clique is
adjacent to each vertex of the independent set. We study the following two conjectures.

\begin{conjecture}[\cite{AouchicheBell2008,AouchicheHansen2010,AouchicheCaporossiHansen2013,AouchicheHansen2013,ElphickAouchiche2017}]
\label{conj:NG-conjecture}
Let $G$ be a graph on $n$ vertices. Then
\[
\lambda_1(G) + \lambda_1(\overline{G}) \leq \frac{4}{3} n - \frac{5}{3} - \frac{1}{6}
\begin{dcases}
3n-1-\sqrt{9n^2-6n+9}, & n \equiv 0 \hspace{-2.5mm} \pmod{3}, \\
3n-2-\sqrt{9n^2-12n+12}, & n \equiv 1 \hspace{-2.5mm} \pmod{3}, \\
0, & n \equiv 2 \hspace{-2.5mm} \pmod{3}.
\end{dcases}
\]
Equality holds if and only if $G$ \textup{(}or $\overline{G}$\textup{)} is the complete split graph with a clique  
on $\lfloor n/3\rfloor$ vertices, and also on $\lceil n/3\rceil$ vertices if $n \equiv 2 \pmod{3}$.
\end{conjecture}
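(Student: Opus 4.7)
The plan is to recast the problem in the graphon framework, solve the continuous optimization there, and then transfer the asymptotic conclusion back to finite graphs via a stability/edge-perturbation argument. Identify each symmetric measurable $W : [0,1]^2 \to [0,1]$ with the compact self-adjoint operator $T_W$ on $L^2([0,1])$ given by $(T_W f)(x) = \int_0^1 W(x,y) f(y)\dif y$, and let $\lambda_1(W)$ denote its top (Perron) eigenvalue. The step graphon $W_G$ of an $n$-vertex graph $G$ satisfies $\lambda_1(W_G) = \lambda_1(G)/n$, and $W_{\overline{G}} = 1 - W_G$. Conjecture~\ref{conj:NG-conjecture} then reduces, to leading order, to the graphon statement
\[
\Lambda := \sup_W \bigl(\lambda_1(W) + \lambda_1(1-W)\bigr) = \frac{4}{3},
\]
attained uniquely (modulo measure-preserving transformations) by the graphon $W^{\ast}$ of a complete split graph whose clique block has measure $1/3$.

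The first main step is the graphon-level optimization. Compactness of the graphon space in the cut metric together with continuity of $\lambda_1$ yields a maximizer. Letting $f, g$ denote the normalized non-negative Perron eigenfunctions of $T_{W^\ast}$ and $T_{1-W^\ast}$, the functional derivatives $\delta \lambda_1(W)/\delta W(x,y) = 2f(x)f(y)$ and $\delta \lambda_1(1-W)/\delta W(x,y) = -2 g(x) g(y)$ together with the constraint $0 \leq W \leq 1$ force
\[
W^\ast(x,y) = \mathbf{1}\bigl[f(x)f(y) > g(x)g(y)\bigr]
\]
almost everywhere; in particular $W^\ast$ is $\{0,1\}$-valued. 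The integral equations $T_{W^\ast} f = \lambda_1(W^\ast) f$ and $T_{1 - W^\ast} g = \lambda_1(1-W^\ast) g$ then imply that $f(x)$ and $g(x)$ depend on $x$ only through the joint type $(f(x), g(x))$, and a short bootstrap forces this type to take only two distinct values; thus $W^\ast$ is a two-block step graphon. A finite-dimensional optimization over such step graphons, identical in form to the calculation of $\lambda_1(CS_{n,\omega}) + \lambda_1(\overline{CS_{n,\omega}})$, pins the supremum at $4/3$, realized uniquely by clique-block measure $1/3$.

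Uniqueness supplies a stability statement: for each $\varepsilon > 0$ there is $\delta > 0$ such that any graphon $W$ with $\lambda_1(W) + \lambda_1(1-W) > 4/3 - \delta$ lies within cut distance $\varepsilon$ of $W^\ast$. Applied to a near-extremal $n$-vertex graph $G$, this yields $\omega_n = n/3 + o(n)$ and a relabeling under which $G$ differs from $CS_{n,\omega_n}$ in $o(n^2)$ pairs.

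Finally, and this is the main obstacle, one must upgrade asymptotic closeness to exact extremality. Writing $G = CS_{n,\omega} \mathbin{\triangle} E_0$ for $\omega$ near $n/3$ and $|E_0|$ small, I would expand $\lambda_1(G) - \lambda_1(CS_{n,\omega})$ and $\lambda_1(\overline{G}) - \lambda_1(\overline{CS_{n,\omega}})$ via first-order spectral perturbation in the respective Perron eigenvectors; both eigenvectors are explicit (the one for $CS_{n,\omega}$ takes two values on the clique/stable-set partition, and the one for $\overline{CS_{n,\omega}} = K_{n-\omega} \cup \overline{K}_\omega$ is, up to normalization, the indicator of the clique part). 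One then verifies that every non-trivial single-edge modification strictly decreases $\lambda_1(G) + \lambda_1(\overline{G})$, reducing matters to an elementary integer optimization over $\omega$ near $n/3$; the two-extremizer phenomenon for $n \equiv 2 \pmod 3$ comes out of the identity
\[
\lambda_1(CS_{n,(n-2)/3}) + \lambda_1(\overline{CS_{n,(n-2)/3}}) = \lambda_1(CS_{n,(n+1)/3}) + \lambda_1(\overline{CS_{n,(n+1)/3}}),
\]
which is checked by direct computation of the explicit spectra. The difficulty is that, at the extremal $CS$, the first-order effects of an edge swap on $\lambda_1(G)$ and on $\lambda_1(\overline{G})$ have opposite signs and comparable magnitudes, so the eigenvector analysis must retain the $O(1/n)$ corrections—particularly along the clique/stable-set boundary—to resolve the near-cancellation and conclude strict monotonicity.
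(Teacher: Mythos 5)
Your overall architecture (graphon limit, uniqueness/stability of the continuous maximizer, then a finite-$n$ upgrade) matches the paper's in its first two stages — the graphon statement you sketch is exactly Terpai's theorem, which the paper simply cites rather than re-derives — but the crucial third stage is where your proposal has a genuine gap, and you essentially acknowledge it yourself without resolving it. Writing $G = CS_{n,\omega}\,\triangle\, E_0$ with $|E_0| = o(n^2)$ and expanding $\lambda_1(G)-\lambda_1(CS_{n,\omega})$ by first-order perturbation in the Perron eigenvector of $CS_{n,\omega}$ is not a controlled step: with up to $o(n^2)$ simultaneous modifications the first-order term computed with the unperturbed eigenvector does not dominate the error, and checking that ``every single-edge modification strictly decreases'' the sum only establishes local optimality of $CS_{n,\omega}$, not a comparison with a graph differing in many edges. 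Worse, as you note, at the extremal $CS$ the first-order effects on $\lambda_1(G)$ and $\lambda_1(\overline{G})$ nearly cancel, and your plan to ``retain the $O(1/n)$ corrections along the boundary'' is precisely the missing argument, not a proof.

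The paper closes this gap by a different mechanism that avoids perturbation about $CS_{n,\omega}$ altogether: it works with the exact extremal graph $G$ and its own unit Perron eigenvectors $\bm{x},\overline{\bm{x}}$. Lemma~\ref{lem:adjacent-iff} shows the exact (not asymptotic) dichotomy that $uv\in E(G)$ if and only if $x_ux_v-\overline{x}_u\overline{x}_v>0$, the equality case being excluded by a connectivity argument; Lemma~\ref{lem:almost-equal} (a vertex-exchange argument) shows $\lambda_1x_u^2+\overline{\lambda}_1\overline{x}_u^2$ is constant up to $O(1/n)$ across vertices; and $\mathcal{L}^2$-convergence of the eigenvectors of $G$ to the two-valued limit eigenfunctions $f,g$ of Theorem~\ref{thm:approximate-structure} then pins every vertex — including the $o(n)$ exceptional ones, handled in Claim~\ref{claim} via the eigenvalue equation and the identity from Lemma~\ref{lem:almost-equal} — to one of the two value profiles with strict separation. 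At that point the sign of $x_ux_v-\overline{x}_u\overline{x}_v$ is determined for every pair, so $G$ is exactly a complete split graph, and only the elementary integer optimization over $\omega$ (which you also describe, including the two-extremizer coincidence for $n\equiv 2\pmod 3$) remains. To repair your write-up you would need either to import this kind of exact local characterization of the extremal graph, or to supply a genuinely quantitative second-order analysis resolving the cancellation you identified; as it stands the final step is a statement of the difficulty rather than an argument.
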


\begin{conjecture}[\cite{CvetkovicRowlinson2007,AouchicheHansen2010,Oliveira2010}]\label{conj:Q-spectral-spread-conjecture}
Over all connected graphs on $n\geq 6$ vertices, $s_Q(G)$ is maximized by $K_{n-1}^+$, the graph formed by adding a pendant edge to 
the complete graph $K_{n-1}$. Furthermore, $s_Q(G)$ is minimized by the path $P_n$ and, in the case that $n$ is odd, by the cycle $C_n$. 
\end{conjecture}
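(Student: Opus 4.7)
My plan is to prove the maximum part of Conjecture~\ref{conj:Q-spectral-spread-conjecture}: for $n$ sufficiently large, $K_{n-1}^+$ uniquely maximizes $s_Q(G)$ among connected graphs on $n$ vertices. I start by computing the extremal value. The automorphism group of $K_{n-1}^+$ fixes the pendant $u$ and its neighbor $v$ while permuting the remaining $n-2$ vertices, so $Q(K_{n-1}^+)$ decomposes into the eigenvalue $n-3$ with multiplicity $n-3$ and a $3\times 3$ symmetric block
\[
\begin{pmatrix} 1 & 1 & 0 \\ 1 & n-1 & \sqrt{n-2} \\ 0 & \sqrt{n-2} & 2n-5 \end{pmatrix}.
\]
Solving the resulting cubic $\lambda^3 - (3n-5)\lambda^2 + n(2n-5)\lambda - 2(n-2)(n-3)$ asymptotically yields $q_1(K_{n-1}^+) = 2n - 4 + O(1/n)$ and $q_n(K_{n-1}^+) = 1 + O(1/n)$, so $s_Q(K_{n-1}^+) = 2n - 5 + O(1/n)$.

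For the upper bound I employ a graphon analysis, as suggested by the paper's keywords. Rescaling $Q$ by $n$, the limit object of a convergent sequence $G_n \to W$ is the operator $L_W = D_W + T_W$ on $L^2[0,1]$, where $D_W$ is multiplication by the degree function $d_W(x) = \int_0^1 W(x,y)\,dy$ and $T_W$ is the integral operator with kernel $W$. The identity $\langle L_W f,f\rangle = \tfrac12 \iint W(x,y)(f(x)+f(y))^2\,dx\,dy$ combined with AM-GM gives $q_1(L_W) \leq 2\|f\|^2$, while nonnegativity of the expression gives $q_{\min}(L_W) \geq 0$. Hence $s_Q(W) \leq 2$ for every graphon, with supremum approached but not attained by $\mathbf{1}_{A\times A}$ as $|A|\to 1$. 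This yields the soft bound $\limsup_n s_Q(G_n)/n \leq 2$, already matching $s_Q(K_{n-1}^+)/n \to 2$.

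The crux is translating this qualitative statement into an $O(1)$-precision structural result. If $s_Q(G) \geq 2n - 5 - C$ for a suitable constant $C$, then $q_n \geq 0$ and $q_1 \leq 2\Delta$ force $\Delta(G) \geq n - O(1)$, so $G$ has a vertex $v$ of nearly universal degree. Cauchy interlacing for the principal submatrix $Q(G - v)$ combined with Weyl-type perturbation bounds for small-rank edge modifications forces $G$ to contain a $K_m$ with $m \geq n - O(1)$, the complementary vertices carrying bounded total degree. A Rayleigh-quotient analysis via test functions supported on these low-degree vertices then shows that at most one such vertex can contribute a near-zero $q_n$---which is the pendant.

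The problem is thereby reduced to a finite list of candidates within bounded edit distance of $K_{n-1}^+$, such as $K_n$, $K_n - e$, $K_{n-2}$ with two pendants (on the same or distinct vertices), and $K_{n-2}$ with a pendant $P_3$. Each retains enough symmetry to reduce $Q$ to a secular matrix of size at most $5$, and an asymptotic root analysis confirms $s_Q < s_Q(K_{n-1}^+)$ strictly for large $n$ (for instance, $s_Q(K_n) = n$, and $s_Q(K_n - e) = 2n - 4 - \Theta(1)$ from a $4\times 4$ block). I expect the main obstacle to be the third stage: the graphon supremum is degenerate, so the macroscopic limit cannot distinguish $K_{n-1}^+$ from its neighbors at $O(1)$ edit distance, and one needs spectral perturbation bounds sensitive to pendant-like local structure that produce a small $q_n$ while keeping $q_1$ near its maximum.
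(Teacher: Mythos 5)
Your outline has a genuine gap at exactly the step you call the crux, and the surrounding machinery does not help close it. The graphon stage is vacuous for this problem: the bound $s_Q(G)\le 2n$ already follows from $q_1\le 2\Delta\le 2(n-1)$ and $q_n\ge 0$, and since (as you note) the limiting functional's supremum is not attained, cut-metric convergence gives no structural information about the extremal graph at the $O(1)$ scale where the problem lives. (The paper uses graphons only for the Nordhaus--Gaddum result; its $Q$-spread proof is purely discrete.) The decisive assertion --- that $s_Q(G)\ge 2n-5-C$ together with interlacing and Weyl-type perturbation bounds forces a clique on $n-O(1)$ vertices with the remaining vertices of bounded degree --- is not proved and does not follow from the tools named. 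What one actually gets from $q_1>2n-5$ is an edge count, $|E(G)|>(n-1)(n-3)/2$, i.e.\ at most roughly $\tfrac32 n$ missing edges; this does not yield a clique of size $n-O(1)$ (e.g.\ $K_n$ minus a perfect matching meets the edge bound but has clique number $n/2$), and interlacing on $Q(G-v)$ gives nothing of this precision either. The paper bridges this gap with eigenvector information: a variational edge-toggling lemma (if $x_u+x_v>|z_u+z_v|$ then $uv\in E(G)$, and conversely one may delete $uv$ when the inequality reverses and $G-uv$ stays connected), plus the estimates that exactly one vertex $w$ has degree $o(n)$, that $x_v=\Theta(n^{-1/2})$ for all $v\ne w$, and that $|z_v|=o(n^{-1/2})$ for all $v\ne w$, so the $q_n$-eigenvector localizes on $w$; these facts make $G-w$ complete and then force $d(w)=1$. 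Your sketch contains no mechanism producing such eigenvector delocalization/localization statements.

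Two further problems. The ``finite list of candidates within bounded edit distance of $K_{n-1}^+$'' is never derived (and would in any case require controlling all graphs in the structural class, not a handful of symmetric examples). And the illustrative computation $s_Q(K_n-e)=2n-4-\Theta(1)$ is false: deleting an edge perturbs $Q(K_n)$ by a matrix of spectral norm $2$, so $q_n(K_n-e)\ge n-4$ and hence $s_Q(K_n-e)\le n+2$. This error is symptomatic: a near-maximal $Q$-spread requires a vertex of very small degree to pull $q_n$ down to $O(1)$, which is precisely the structural feature your middle step would have to establish and currently does not. The computation of $s_Q(K_{n-1}^+)=2n-5+o(1)$ via the $3\times 3$ quotient block is fine, but everything between that and the conclusion needs to be rebuilt along eigenvector lines.
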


It is worth mentioning that the second part of Conjecture \ref{conj:Q-spectral-spread-conjecture} was proved by 
Das \cite{Das2012} and Fan et al. \cite{FanFallat2012} independently using different proof techniques.

In this paper we confirm Conjecture \ref{conj:NG-conjecture} in Section \ref{sec3} and the first part of 
Conjecture \ref{conj:Q-spectral-spread-conjecture} in Section \ref{sec4} for sufficiently large $n$.

\subsection{Nordhaus--Gaddum type inequality}

The study of Nordhaus--Gaddum type inequality has a long history, dating back to a classical paper of Nordhaus and Gaddum 
\cite{NordhausGaddum1956}. The following inequalities for the chromatic numbers $\chi(G)$ and $\chi(\overline{G})$ has been 
established.
\[
2\sqrt{n} \leq\chi(G) + \chi(\overline{G}) \leq n+1, 
\]
\[
n \leq \chi(G) \cdot \chi(\overline{G}) \leq \frac{(n+1)^2}{4},
\] 
where $\overline{G}$ is the complement of $G$. Since then, any bound on the sum or the product of an invariant in a graph $G$ 
and the same invariant in $\overline{G}$ is called a \emph{Nordhaus--Gaddum type inequality}. For an exhaustive survey of such 
relations see Aouchiche and Hansen \cite{AouchicheHansen2013} and the references therein. Many of those inequalities involve 
eigenvalues of adjacency, Laplacian and signless Laplacian matrices of graphs. The first known spectral Nordhaus--Gaddum results 
belong to Nosal \cite{Nosal1970}, and to Amin--Hakimi \cite{AminHakimi1972}, who proved that for each graph $G$ of order $n$,
\[
\lambda_1(G) + \lambda_1(\overline{G}) < \sqrt{2} (n-1).
\]
Nikiforov \cite{Nikiforov2007} enhanced the above bound to $(\sqrt{2} - c)n$, where $c$ is some constant not less than $10^{-7}$. 
In the same paper, Nikiforov also conjectured that
\[
\lambda_1(G) + \lambda_1(\overline{G}) \leq \frac{4}{3} n + O(1).
\]
In \cite{Csikvari2009}, Csikv\'ari improved Nikiforov's bound to $(1+\sqrt{3})n/2 - 1$. Later, Terpai \cite{Terpai2011} 
resolved Nikiforov's conjecture using graph limit method.

Let us note that there are also some Nordhaus--Gaddum type inequalities for (signless) Laplacian eigenvalues of graphs. 
Most of the results on Laplacian eigenvalues are around the following conjecture which was posed in \cite{YouLiu2012,ZhaiShuHong2011}: 
for any graph $G$ on $n$ vertices,
\[
\mu_1(G) + \mu_1(\overline{G}) \leq 2n-1,
\]
with equality if and only if $G$ or $\overline{G}$ is isomorphic to the join of an isolated vertex and a disconnected 
graph of order $n-1$. Here, $\mu_1(G)$ and $\mu_1(\overline{G})$ are the largest eigenvalues of Laplacian matrices $L(G)$ 
and $L(\overline{G})$, respectively. Recently, Einollahzadeh and Karkhaneei \cite{Einollahzadeh2021} resolve the conjecture.
For signless Laplacian eigenvalues, Ashraf and Tayfeh-Rezaie \cite{AshrafTayfeh-Rezaie2014} prove that
$q_1(G) + q_1(\overline{G}) \leq 3n-4$ and equality holds if and only if $G$ is the star $K_{1,n-1}$, which confirms a 
conjecture in \cite{AouchicheHansen2013}.

We now turn to our topic. Using AutoGraphiX, Aouchiche et al. \cite{AouchicheBell2008} posed Conjecture \ref{conj:NG-conjecture} 
which also appeared in \cite{AouchicheHansen2010,AouchicheCaporossiHansen2013,AouchicheHansen2013,ElphickAouchiche2017}.
As mentioned above, Terpai \cite{Terpai2011} proved a tight upper bound on $\lambda_1(G) + \lambda_1(\overline{G})$ using 
analytical methods.  To be precise, he showed that
\[
\lambda_1(G) + \lambda_1(\overline{G}) \leq \frac{4}{3} n - 1.
\]
According to our knowledge, this is the best result until now.

\subsection{$Q$-spread of graphs}

The \emph{spread} $s(M)$ of an arbitrary $n\times n$ matrix $M$ is defined as the maximum modulus over the difference 
of all pairs of eigenvalues of $M$. When considering the adjacency matrix of a graph $G$, the spread is simply the 
distance between $\lambda_1$ and $\lambda_n$, denoted by $s(G):= \lambda_1(G) - \lambda_n(G)$. The adjacency spread of a 
graph has received much attention. In \cite{GregoryHershkowitz2001}, the authors investigated a number of properties 
regarding the spread of a graph, determining upper and lower bounds on $s(G)$. Recently, Breen et al. \cite{BreenRiasanovksy2022} 
determine the unique graph maximizing the spread among all $n$-vertex graphs, which resolves a conjecture in \cite{GregoryHershkowitz2001}.

With regard to the spread for signless Laplacian, we denote $s_Q(G):= q_1(G) - q_n(G)$ and refer it as \emph{$Q$-spread} of $G$.
Oliveira et al. \cite{Oliveira2010} present some upper and lower bounds for signless Laplacian spread. Liu et al. \cite{LiuLiu2010} 
determine the unique graph with maximum signless Laplacian spread among the class of connected unicyclic graphs of order 
$n$. Das \cite{Das2012} and Fan et al. \cite{FanFallat2012} confirm the second part of Conjecture \ref{conj:Q-spectral-spread-conjecture}
independently. We prove the first part for $n$ large enough.

\section{Preliminaries}

\subsection{Notation}

Given a subset $X$ of the vertex set $V(G)$ of a graph $G$, the subgraph of $G$ induced by $X$ is denoted by $G[X]$, and 
the graph obtained from $G$ by deleting $X$ is denoted by $G\setminus X$. We use $E(X)$ to denote the set of edges in the 
induced subgraph $G[X]$. As usual, for a vertex $v$ of $G$ we write $d_G(v)$ and $N_G(v)$ for the degree of $v$ and the 
set of neighbors of $v$ in $G$, respectively. If the underlying graph $G$ is clear from the context, simply $d(v)$ and $N(v)$. 
We denote the maximum degree and minimum degree of $G$ by $\Delta(G)$ and $\delta(G)$, respectively. For a vector 
$\bm{x} \in \mathbb{R}^n$, we denote by $x_{\max}:=\{|x_i|: 1\leq i\leq n\}$.

% Next, we collect some results that will be used in Section \ref{sec4}.
% The following inequality can be traced back to Merris \cite{Merris1998}.

% \begin{lemma}[\cite{Merris1998}]\label{lem:q1-upper-bound}
% Let $G$ be a graph. Then 
% \[
% q_1(G) \leq\max_{u\in V(G)} \Big\{ d(u) + \frac{1}{d(u)} \sum_{v\in N(u)} d(v) : d(u) > 0 \Big\}.
% \]
% \end{lemma}

% \begin{lemma}[\cite{Das2004}]\label{lem:average-2-degree-upper-bound}
% Let $G$ be a graph with $n$ vertices and $m$ edges. Then 
% \[
% \max_{u\in V(G)} \Big\{ d(u) + \frac{1}{d(u)} \sum_{v\in N(u)} d(v) : d(u) > 0 \Big\} \leq \frac{2m}{n-1} + n-2.
% \]
% \end{lemma}

\subsection{Graphons and graphon operators}

A \emph{graphon} is a symmetric measurable function $W: [0,1]^2 \to [0,1]$ (by symmetric, we mean 
$W(x,y)=W(y,x)$ for all $(x,y)\in [0,1]^2$). Let $\mathcal{W}$ denote the space of all graphon. 
Given a graph $G$ with $n$ vertices labeled $1,2,\ldots,n$, we define its \emph{associated graphon}
$W_G : [0, 1]^2 \to [0, 1]$ by first partitioning $[0, 1]$ into $n$ equal-length intervals 
$I_1,I_2,\ldots, I_n$ and setting $W_G$ to be $1$ on all $I_i \times I_j$ where $ij$ is an edge 
of $G$, and $0$ on all other $I_i \times I_j$'s. Clearly, associated graphon is not unique due 
to different labeling of vertices. Using an equivalence relation on $\mathcal{W}$ derived from 
the so-called \emph{cut metric}, we can identify associated graphons that are equivalent up to 
relabelling, and up to any differences on a set of measure zero. The \emph{cut norm} of a measurable 
function $W: [0,1]^2\to\mathbb{R}$ is defined as 
\[
\|W\|_{\square} := \sup_{S,\,T\subset [0,1]} \Big|\int_{S\times T} W(x,y) \dif x \dif y\Big|,
\]
where the supremum is taken over all measurable subsets $S$ and $T$. Given two symmetric measurable 
functions $U, W: [0,1]^2 \to\mathbb{R}$, we define their \emph{cut metric} to be 
\[
\delta_{\square} (U,W) := \inf_{\phi} \|U - W^{\phi}\|_{\square},
\]
where the infimum is taken over all invertible measure preserving maps $\phi: [0,1]\to [0,1]$, and 
$W^{\phi} (x,y):= W(\phi(x), \phi(y))$. For more results on graphons, we refer to \cite{Lovasz2012} 
for details.
% Let $\widetilde{\mathcal{W}}$ be the set of graphons where any pair of graphons with cut distance zero are 
% considered the same point in the space. This is a metric space under cut metric $\delta_{\square}$. We view 
% every graph $G$ as a point in $\widetilde{\mathcal{W}}$ via its associated graphon (note that several graphons 
% can be identified as the same point in $\widetilde{\mathcal{W}}$).

With $W\in\mathcal{W}$ we associate a graphon operator acting on $\mathcal{L}^2 [0,1]$, defined 
as the linear integral operator
\[
(A_W f) (x) := \int_0^1 W(x,y) f(y) \dif y
\]
for all $f\in\mathcal{L}^2 [0,1]$. Since $W$ is symmetric and bounded, $A_W$ is a compact Hermitian 
operator. In particular, $A_W$ has a discrete, real spectrum whose only possible accumulation point 
is zero.

Let $\mu(W)$ be the maximum eigenvalue of $A_W$. The following proposition establish a connection 
between $\lambda_1(G)$ and $\mu(W_G)$.

\begin{proposition}[\cite{Terpai2011}]\label{prop:relation}
Let $G$ be a graph on $n$ vertices. Then $\lambda_1(G) = n\cdot\mu(W_G)$.
\end{proposition}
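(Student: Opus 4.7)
The plan is to exploit the fact that $W_G$ is constant on every product rectangle $I_i \times I_j$, so the integral operator $A_{W_G}$ essentially reduces to a scaled copy of $A(G)$ on a finite-dimensional subspace. First I would observe that for any $f \in \mathcal{L}^2[0,1]$, the function $(A_{W_G} f)(x) = \int_0^1 W_G(x,y) f(y) \dif y$ is constant on each $I_i$, since $W_G(\cdot,y)$ is constant on $I_i$ for every $y$. Consequently any eigenfunction of $A_{W_G}$ associated with a nonzero eigenvalue must itself be constant on each $I_i$ and therefore lies in the $n$-dimensional subspace $\mathcal{S} \subset \mathcal{L}^2[0,1]$ spanned by the indicators $\mathbf{1}_{I_1}, \ldots, \mathbf{1}_{I_n}$.

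Next I would identify $\mathcal{S}$ with $\mathbb{R}^n$ via $f = \sum_{i=1}^n c_i \mathbf{1}_{I_i} \leftrightarrow \bm{c} = (c_1,\ldots,c_n)^{\top}$ and compute the restricted operator. For $x \in I_i$,
\[
(A_{W_G} f)(x) = \sum_{j=1}^n c_j \int_{I_j} W_G(x,y) \dif y = \frac{1}{n} \sum_{j=1}^n a_{ij} c_j,
\]
because $|I_j| = 1/n$ and $W_G \equiv a_{ij}$ on $I_i \times I_j$. Hence $A_{W_G} f = \mu f$ on $\mathcal{S}$ is equivalent to $A(G)\bm{c} = (n\mu)\bm{c}$, which produces a multiplicity-preserving bijection between the nonzero spectrum of $A_{W_G}$ and $\{\lambda/n : \lambda \text{ a nonzero eigenvalue of } A(G)\}$.

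To finish, I would compare the largest eigenvalues on both sides. Since $A(G)$ is a nonnegative symmetric matrix, $\lambda_1(G) \geq 0$ by Perron--Frobenius; likewise, since $W_G$ is a nonnegative kernel, $\mu(W_G) \geq 0$. The correspondence established above then forces $\mu(W_G) = \lambda_1(G)/n$ whenever $\lambda_1(G) > 0$, and the degenerate case $\lambda_1(G) = 0$ occurs precisely when $G$ has no edges, in which case $W_G \equiv 0$ and $\mu(W_G) = 0$ as well. In either event $\lambda_1(G) = n\cdot\mu(W_G)$. The argument is essentially a bookkeeping exercise; the only step that merits real care is the reduction to $\mathcal{S}$, i.e.\ verifying that every eigenfunction with nonzero eigenvalue is a step function adapted to the partition, which is immediate from the observation that $A_{W_G} f$ is always such a step function.
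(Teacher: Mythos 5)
Your proof is correct, and since the paper states this proposition with only a citation to Terpai (giving no proof of its own), there is nothing to contrast: your reduction of $A_{W_G}$ to the $n$-dimensional space of step functions, where it acts as $\frac{1}{n}A(G)$, is exactly the standard argument behind the cited result. The handling of the degenerate case $\lambda_1(G)=0$ (empty graph, $W_G\equiv 0$) and the sign observation $\lambda_1(G)\geq 0$ are all that is needed beyond the bookkeeping, and you supply both.
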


\begin{proposition}[{\cite[Theorem 11.54]{Lovasz2012}}]\label{prop:converge}
Let $\{W_n\}$ be a sequence of graphons converging to $W$ with respect to $\delta_{\square}$. Then $\lim_{n\to\infty} \mu(W_n)=\mu(W)$.
\end{proposition}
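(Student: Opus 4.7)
The plan is to prove $\liminf_n \mu(W_n) \geq \mu(W)$ and $\limsup_n \mu(W_n) \leq \mu(W)$ separately, using the Rayleigh characterization
\[
\mu(W) = \sup_{\|f\|_2 = 1} \int_0^1\!\!\int_0^1 W(x,y)\, f(x) f(y) \dif x \dif y
\]
together with compactness of $A_W$, which guarantees that the supremum is attained by a nonnegative unit eigenfunction. Since $\mu$ is invariant under measure preserving relabelings, after choosing suitable representatives we may assume $\|W_n - W\|_{\square} \to 0$.

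The quantitative link between $\mu$ and the cut norm is the elementary estimate
\[
\Big|\int_0^1\!\!\int_0^1 U(x,y)\, f(x) g(y) \dif x \dif y\Big| \leq C\, \|f\|_\infty \|g\|_\infty \|U\|_{\square}
\]
for any symmetric measurable $U$ and bounded measurable $f,g$, proved by approximating $f$ and $g$ by simple functions with level sets $S_i$ and $T_j$ and bounding each rectangle integral $\int_{S_i \times T_j} U$ by $\|U\|_{\square}$. For the lower bound I fix a nonnegative unit eigenfunction $f^*$ of $A_W$ with eigenvalue $\mu(W)$, truncate to $f^*_K := \min\{f^*, K\}$, and observe that by boundedness of $A_W$ on $\mathcal{L}^2[0,1]$ together with $f^*_K \to f^*$ in $\mathcal{L}^2[0,1]$ one has $\langle A_W f^*_K, f^*_K\rangle / \|f^*_K\|_2^2 \to \mu(W)$ as $K \to \infty$. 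For each fixed $K$ the estimate above yields $\langle A_{W_n} f^*_K, f^*_K\rangle \to \langle A_W f^*_K, f^*_K\rangle$, while the Rayleigh quotient gives $\mu(W_n) \geq \langle A_{W_n} f^*_K, f^*_K\rangle / \|f^*_K\|_2^2$; sending $n \to \infty$ then $K \to \infty$ completes this half.

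For the upper bound, let $f_n^*$ be a nonnegative unit eigenfunction of $A_{W_n}$. If $\limsup_n \mu(W_n) \leq 0$ the inequality is immediate since $\mu(W) \geq 0$ (test with the constant function). Otherwise, on a subsequence, $\mu(W_n) \geq c > 0$, and the eigenvalue identity together with $W_n \leq 1$ and $\|f_n^*\|_1 \leq \|f_n^*\|_2 = 1$ forces $\|f_n^*\|_\infty \leq 1/\mu(W_n) \leq 1/c$. Applying the key estimate,
\[
\mu(W_n) = \int_0^1\!\!\int_0^1 W_n\, f_n^* f_n^* \dif x \dif y \leq \int_0^1\!\!\int_0^1 W\, f_n^* f_n^* \dif x \dif y + C c^{-2} \|W_n - W\|_{\square} \leq \mu(W) + o(1),
\]
where the final inequality uses the Rayleigh characterization of $\mu(W)$.

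The main obstacle is the degeneration of the bound $\|f_n^*\|_\infty \leq 1/\mu(W_n)$ when the top eigenvalues approach zero; this is defused by the case distinction above, since small $\mu(W_n)$ already makes the upper bound vacuous. A secondary technical point is verifying $\langle A_W f^*_K, f^*_K\rangle \to \mu(W)$, which rests on compactness of $A_W$ on $\mathcal{L}^2[0,1]$ and attainment of the supremum.
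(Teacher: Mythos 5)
Your argument is correct, but note that the paper gives no proof of this proposition at all---it is imported verbatim from \cite[Theorem 11.54]{Lovasz2012}---so the comparison is really with the cited source. There, the stronger statement (convergence of the whole spectrum, not just the top eigenvalue) is proved, using the same two ingredients you use: the bound $\bigl|\int U f g\bigr|\leq 4\|f\|_\infty\|g\|_\infty\|U\|_{\square}$ for the quadratic form of a kernel, and an a priori $\mathcal{L}^\infty$ bound on eigenfunctions whose eigenvalues stay away from $0$, combined with a minimax characterization of the lower eigenvalues; your proof is the natural specialization of that mechanism to $\mu$ alone, which is all this paper needs, and it is self-contained and sound. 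Two small streamlining remarks rather than gaps: the truncation $f^*_K=\min\{f^*,K\}$ in your lower bound is unnecessary, since whenever $\mu(W)>0$ the eigenvalue equation already gives $\|f^*\|_\infty\leq\|f^*\|_1/\mu(W)\leq 1/\mu(W)$, and the degenerate case $\mu(W)=0$ forces $W=0$ a.e.\ (because $\mu(W)\geq\int\!\!\int W\geq 0$), where the lower bound is trivial since $\mu(W_n)\geq\int\!\!\int W_n\geq 0$; also, your reduction ``choose representatives so that $\|W_n-W\|_{\square}\to 0$'' should be read as replacing $W_n$ by $W_n^{\phi_n^{-1}}$ for near-optimal measure-preserving $\phi_n$, which is legitimate because both $\|\cdot\|_{\square}$ and $\mu$ are invariant under such relabelings.
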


\section{Nordhaus--Gaddum type inequality for spectral radius of graphs}
\label{sec3}

The aim of this section is to give a proof of Conjecture \ref{conj:NG-conjecture}. Throughout this 
section, we always assume that $G$ is a graph such that $p(G)$ attaining the maximum among all graphs 
on $n$ vertices, where $p(G) := \lambda_1(G)+\lambda_1(\overline{G})$. Observe that at least one of 
$G$ and $\overline{G}$ is connected. Without loss of generality, we assume $G$ is connected.
Let $\bm{x}$ and $\overline{\bm{x}}$ be the nonnegative eigenvectors with unit length corresponding 
to $\lambda_1(G)$ and $\lambda_1(\overline{G})$, respectively. For convenience, we set $\lambda_1:=\lambda_1(G)$ 
and $\overline{\lambda}_1 := \lambda_1(\overline{G})$.

First we prove a number of properties of $G$.

\begin{lemma}\label{lem:adjacent-iff}
For any vertices $u$ and $v$, $x_ux_v - \overline{x}_u \overline{x}_v > 0$ if and only if 
$u$ and $v$ are adjacent.
\end{lemma}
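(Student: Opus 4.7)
The plan is to use the classical edge-flipping argument combined with the maximality of $p(G)$. Fix distinct vertices $u, v$ and form a new graph $G^{*}$ by toggling the pair $uv$: delete the edge if it is present in $G$, otherwise add it. Note that $\overline{G^{*}}$ is obtained from $\overline{G}$ by toggling the same pair in the opposite direction. I will estimate $\lambda_1(G^{*}) + \lambda_1(\overline{G^{*}})$ from below via the Rayleigh characterization, using the existing eigenvectors $\bm{x}$ and $\overline{\bm{x}}$ as test vectors.

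Concretely, writing $A(G^{*}) = A(G) \pm (e_u e_v^{\top} + e_v e_u^{\top})$ with the sign dictated by the direction of the toggle, one obtains $\bm{x}^{\top} A(G^{*}) \bm{x} = \lambda_1 \pm 2 x_u x_v$ and, because the complement toggles in the opposite sense, $\overline{\bm{x}}^{\top} A(\overline{G^{*}}) \overline{\bm{x}} = \overline{\lambda}_1 \mp 2 \overline{x}_u \overline{x}_v$. Summing the Rayleigh lower bounds $\lambda_1(G^{*}) \geq \bm{x}^{\top} A(G^{*}) \bm{x}$ and $\lambda_1(\overline{G^{*}}) \geq \overline{\bm{x}}^{\top} A(\overline{G^{*}}) \overline{\bm{x}}$ and invoking the maximality $p(G^{*}) \leq p(G)$ yields the weak inequalities $x_u x_v \geq \overline{x}_u \overline{x}_v$ when $uv \in E(G)$, and $x_u x_v \leq \overline{x}_u \overline{x}_v$ when $uv \notin E(G)$.

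To promote these weak inequalities to the strict ones claimed, I would examine when equality could occur. If equality held in the summed bound, then both of the individual Rayleigh inequalities would be tight, so $\bm{x}$ would be a Perron eigenvector of $A(G^{*})$ as well. Combining $A(G^{*}) \bm{x} = (\lambda_1 \pm 2 x_u x_v) \bm{x}$ with $A(G) \bm{x} = \lambda_1 \bm{x}$ reduces the eigenvalue equation to $x_v e_u + x_u e_v = 2 x_u x_v \bm{x}$; reading off any coordinate $w \notin \{u, v\}$ then forces $x_u x_v x_w = 0$, hence $x_w = 0$, contradicting the strict positivity of the Perron eigenvector of the connected graph $G$ (for $n \geq 3$; the trivial case $n = 2$ can be inspected directly). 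The main obstacle I anticipate is precisely this equality analysis — ruling out the degenerate possibility that $\bm{x}$ could still serve as a Perron vector after toggling $uv$ — but the connectivity hypothesis, together with the Perron--Frobenius theorem, settles it immediately.
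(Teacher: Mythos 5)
Your proof is correct and takes essentially the same route as the paper: toggle the pair $uv$, use $\bm{x}$ and $\overline{\bm{x}}$ as Rayleigh test vectors, invoke maximality of $p(G)$ to get the weak inequalities, and eliminate the equality case by comparing the eigenvalue equations of $G$ and the toggled graph at a coordinate $w\notin\{u,v\}$, contradicting the Perron positivity of $\bm{x}$ guaranteed by connectivity. (One cosmetic slip: from $x_ux_vx_w=0$ you can only conclude that \emph{some} coordinate vanishes, not necessarily $x_w$, but any of the three vanishing contradicts positivity, so the argument stands.)
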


\begin{proof}
We first show that $x_ux_v - \overline{x}_u\overline{x}_v \neq 0$ by contradiction.
Denote by $H$ the graph formed by adding or deleting the edge $uv$ from $G$. By Rayleigh principle we see 
\begin{align*}
\lambda_1(H) 
& \geq \bm{x}^{\mathrm{T}} A(H) \bm{x} = \lambda_1(G) + 2(-1)^{a_{uv}} \cdot x_ux_v, \\
\lambda_1(\overline{H}) 
& \geq  \overline{\bm{x}}^{\mathrm{T}} A(\overline{H}) \overline{\bm{x}} 
= \lambda_1(\overline{G}) - 2(-1)^{a_{uv}} \cdot \overline{x}_u \overline{x}_v.
\end{align*}
Therefore, if $x_ux_v - \overline{x}_u\overline{x}_v = 0$, then
\begin{equation}\label{eq:p(H)-vs-p(G)}
p(H) = \lambda_1(H) + \lambda_1(\overline{H}) \geq \lambda_1(G) + \lambda_1(\overline{G}) 
+ 2 (-1)^{a_{uv}} \cdot (x_ux_v - \overline{x}_u\overline{x}_v) = p(G).
\end{equation}
Hence, each inequality above must be equality. So $\bm{x}$, $\overline{\bm{x}}$ are also eigenvectors of 
$H$ and $\overline{H}$ corresponding to $\lambda_1(H)$ and $\lambda_1(\overline{H})$, respectively. 
In particular, $A(H) \bm{x} = \lambda_1(H) \bm{x}$.

Since $G$ is connected, we see that $\bm{x}$ is positive. Recall that $A(G) \bm{x} = \lambda_1(G) \bm{x}$ and 
$A(H) \bm{x} = \lambda_1(H) \bm{x}$. We immediately obtain that
\begin{equation}\label{eq:diff-G-H}
(\lambda_1(G) - \lambda_1(H)) \bm{x} = (A(G) - A(H)) \bm{x}.
\end{equation}
Choose a vertex $w\notin \{u,v\}$ and consider the eigenvalue equations with respect to $w$. We deduce that
\[
(\lambda_1(G) - \lambda_1(H)) x_w = 0,
\]
which yields $\lambda_1(G)=\lambda_1(H)$. Combining with \eqref{eq:diff-G-H} we get a contradiction.

Finally, if $x_ux_v - \overline{x}_u\overline{x}_v > 0$ and $a_{uv} =0$, then $p(H)>p(G)$ by \eqref{eq:p(H)-vs-p(G)}, a 
contradiction. Conversely, if $a_{uv}=1$ and $x_ux_v - \overline{x}_u\overline{x}_v < 0$, then $p(H)>p(G)$ by 
\eqref{eq:p(H)-vs-p(G)}, a contradiction. This completes the proof of this lemma.
\end{proof}

\begin{lemma}\label{lem:bounds-xmax-zmax}
$x_{\max} = \Theta(n^{-1/2})$ and $\overline{x}_{\max} = \Theta(n^{-1/2})$.
\end{lemma}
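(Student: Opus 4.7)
The plan is to bound $x_{\max}$ and $\overline{x}_{\max}$ separately by the trivial lower bound $1/\sqrt{n}$ and a Cauchy--Schwarz upper bound exploiting the fact that both $\lambda_1$ and $\overline{\lambda}_1$ are of order $n$.

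First I would establish that $\lambda_1 = \Theta(n)$ and $\overline{\lambda}_1 = \Theta(n)$. Since $G$ maximizes $p(G)$, comparing with the complete split graph $CS_{n,\lfloor n/3\rfloor}$ (a candidate extremal graph) gives $p(G)\ge \tfrac{4}{3}n - O(1)$. Together with the trivial bounds $\lambda_1\le n-1$ and $\overline{\lambda}_1\le n-1$, this yields $\lambda_1 \ge p(G) - (n-1) \ge \tfrac{n}{3} - O(1)$, and symmetrically for $\overline{\lambda}_1$. Hence both quantities lie in $[\Omega(n), n-1]$.

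Next, for the upper bound on $x_{\max}$, I would apply Cauchy--Schwarz to the eigenvalue equation at a vertex $v$. Since $\bm{x}$ is nonnegative and unit length,
\[
\lambda_1\, x_v \;=\; \sum_{u\in N(v)} x_u \;\le\; \sqrt{d_G(v)}\,\Big(\sum_{u\in N(v)} x_u^2\Big)^{1/2} \;\le\; \sqrt{n-1}\,\|\bm{x}\|_2 \;=\; \sqrt{n-1}.
\]
Combining with $\lambda_1 = \Omega(n)$ gives $x_v = O(n^{-1/2})$ uniformly in $v$. The same argument applied to $\overline{\bm{x}}$ (restricted, if necessary, to the component of $\overline{G}$ supporting its Perron vector) yields $\overline{x}_v = O(n^{-1/2})$, using the degree bound $d_{\overline{G}}(v)\le n-1$ and $\overline{\lambda}_1 = \Omega(n)$.

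The lower bounds $x_{\max} \ge n^{-1/2}$ and $\overline{x}_{\max} \ge n^{-1/2}$ are immediate from $\sum_v x_v^2 = \sum_v \overline{x}_v^2 = 1$. The only nontrivial step is securing $\lambda_1, \overline{\lambda}_1 = \Omega(n)$; this is the place where the extremal character of $G$ is actually needed, and potential annoyances (such as $\overline{G}$ being disconnected, so $\overline{\bm{x}}$ has zero entries) do not affect the Cauchy--Schwarz estimate since it only uses $\|\overline{\bm{x}}\|_2=1$ and the degree bound. I therefore expect no serious obstacle.
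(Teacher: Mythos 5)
Your proposal is correct and follows essentially the same route as the paper: compare $p(G)$ with a specific near-extremal graph to get $\lambda_1,\overline{\lambda}_1=\Omega(n)$, then apply Cauchy--Schwarz to the eigenvalue equation for the upper bound and use $\|\bm{x}\|_2=\|\overline{\bm{x}}\|_2=1$ for the lower bound. The only cosmetic difference is the comparison graph (the paper uses $K_{\lfloor n/3\rfloor,\lceil 2n/3\rceil}$, giving $p>1.1n$, while you use $CS_{n,\lfloor n/3\rfloor}$), which changes nothing essential.
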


\begin{proof}
Consider the complete bipartite graph $K_{\lfloor n/3\rfloor, \lceil 2n/3\rceil}$. One can check that 
$p(K_{\lfloor n/3\rfloor, \lceil 2n/3\rceil}) > 1.1n$. Thus, $\lambda_1 + \overline{\lambda}_1 > 1.1n$ 
due to the maximality. On the other hand, $\lambda_1 < n$ and $\overline{\lambda}_1 < n$,
we have $\lambda_1 = \Theta(n)$ and $\overline{\lambda}_1 = \Theta(n)$.

Let $u$ be a vertex such that $x_u = x_{\max}$. By eigenvalue equation and Cauchy--Schwarz inequality,
\[
\lambda_1^2 x_u^2 = \bigg(\sum_{uw\in E(G)} x_w\bigg)^2 \leq d(u) \sum_{uw\in E(G)} x_w^2 \leq d(u),
\]
which yields that 
\[
x_{\max} = x_u < \frac{\sqrt{d(u)}}{\lambda_1} = O\Big(\frac{1}{\sqrt{n}}\Big). 
\]
On the other hand, it is clear that $x_{\max}\geq n^{-1/2}$. Hence, we see $x_{\max} = \Theta(n^{-1/2})$. 
Likewise, we have $\overline{x}_{\max} = \Theta(n^{-1/2})$. 
\end{proof}

\begin{lemma}\label{lem:almost-equal}
For any pair of vertices $u$ and $v$, we have
\[
\big|(\lambda_1 x_u^2 + \overline{\lambda}_1 \overline{x}_u^2) - (\lambda_1 x_v^2 + \overline{\lambda}_1 \overline{x}_v^2)\big| 
= O\Big(\frac{1}{n}\Big).
\]
\end{lemma}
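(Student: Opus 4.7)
The plan is to combine the extremality of $G$ with a neighborhood-swap perturbation that treats $G$ and $\overline{G}$ symmetrically. For any two vertices $u,v$, I would define a graph $G' = G'(u,v)$ on the same vertex set by replacing the ``external'' neighborhood of $u$ with that of $v$: declare $uw \in E(G')$ iff $vw \in E(G)$ for every $w \notin \{u,v\}$, and leave the edge $uv$ as well as every edge not incident to $u$ unchanged. The key observation is that $\overline{G}'$ is obtained from $\overline{G}$ by the very same operation, since a non-edge from $u$ in $G'$ is precisely a non-edge from $v$ in $G$ (externally). Hence $\overline{G}'$ can be analyzed in exact parallel with $G'$.

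Next, I would test $\lambda_1(G')$ against the vector $\bm{x}'$ defined by $x'_u := x_v$ and $x'_w := x_w$ for $w \neq u$, and likewise test $\lambda_1(\overline{G}')$ against $\overline{\bm{x}}'$ with $\overline{x}'_u := \overline{x}_v$ and $\overline{x}'_w := \overline{x}_w$ otherwise. Using the eigenvalue equation $A(G)\bm{x} = \lambda_1 \bm{x}$ at the vertex $v$ and writing $\epsilon := [uv \in E(G)]$, a direct computation gives
\[
\bm{x}'^{\mathrm{T}} A(G') \bm{x}' = \lambda_1 + 2\lambda_1 (x_v^2 - x_u^2) + 2\epsilon\, x_v(x_v - x_u), \qquad \|\bm{x}'\|^2 = 1 + x_v^2 - x_u^2,
\]
and an analogous identity in $\overline{G}'$ with $\epsilon$ replaced by $\overline{\epsilon} := 1 - \epsilon$. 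By Lemma \ref{lem:bounds-xmax-zmax} each of $x_v^2 - x_u^2$ and $x_v(x_v - x_u)$ is $O(1/n)$, so Taylor-expanding $(1 + x_v^2 - x_u^2)^{-1}$ yields the Rayleigh-quotient bound
\[
\lambda_1(G') \geq \lambda_1 + \lambda_1(x_v^2 - x_u^2) + O(1/n),
\]
together with the analogous estimate for $\lambda_1(\overline{G}')$. Summing the two, invoking the extremal hypothesis $p(G') \leq p(G)$, and canceling gives
\[
\lambda_1(x_v^2 - x_u^2) + \overline{\lambda}_1(\overline{x}_v^2 - \overline{x}_u^2) \leq O(1/n),
\]
and swapping the roles of $u$ and $v$ furnishes the matching reverse inequality.

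The one delicate point is bookkeeping in the Taylor expansion: the leading correction $\lambda_1(x_v^2 - x_u^2)$ is only of constant order, so one must verify that second-order contributions such as $\lambda_1 (x_v^2 - x_u^2)^2$ really are $O(1/n)$. This is precisely where $\lambda_1 = \Theta(n)$ from Lemma \ref{lem:bounds-xmax-zmax} combined with $|x_v^2 - x_u^2| = O(1/n)$ conspire favorably, yielding $\lambda_1 \cdot O(1/n)^2 = O(1/n)$. Conceptually, the lemma will say that the extremal graph must distribute the weight $\lambda_1 x_u^2 + \overline{\lambda}_1 \overline{x}_u^2$ almost uniformly across the vertices, since any coarser discrepancy could be exploited by the above swap to contradict extremality.
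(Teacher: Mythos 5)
Your proposal is correct and follows essentially the same route as the paper: replace $u$'s neighborhood by that of $v$, test the perturbed graph and its complement with the vectors obtained by copying $x_v$ (resp.\ $\overline{x}_v$) into coordinate $u$, and combine the Rayleigh-quotient estimates with extremality and the $\Theta(n^{-1/2})$ entry bounds, then swap $u$ and $v$ for the reverse inequality. The only cosmetic difference is that you keep the status of the edge $uv$ fixed while the paper's construction drops it (so it reappears in the complement); in both cases the resulting extra terms are $O(1/n)$ and the argument is unaffected.
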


\begin{proof}
Let $H$ be the graph obtained from $G$ by deleting all edges incident with $u$, and adding edges $\{uw: w\in N_G(v)\}$. 
That is,
\[
E(H) = E(G) \setminus\{uw: w\in N_G(u)\} \cup \{uw: w\in N_G(v)\}.
\]
Define two vectors $\bm{z}$ and $\overline{\bm{z}}$ for $H$ by 
\[
z_w = 
\begin{cases}
x_w, & w\neq u, \\
x_v, & w = u, 
\end{cases}
\]
and 
\[
\overline{z}_w = 
\begin{cases}
\overline{x}_w, & w\neq u, \\
\overline{x}_v, & w = u.
\end{cases}
\]
Noting that $\lambda_1=\bm{x}^{\mathrm{T}} A(G) \bm{x}$ we deduce that
\begin{align*}
\bm{z}^{\mathrm{T}} A(H) \bm{z} - \lambda_1
& = 2 z_u \sum_{w\in N_G(v)\setminus \{u\}} x_w - 2 x_u \sum_{w\in N_G(u)} x_w \\
& = 2 x_v \sum_{w\in N_G(v)} x_w - 2 x_u \sum_{w\in N_G(u)} x_w - 2a_{uv} x_ux_v \\
& = 2\lambda_1 x_v^2 - 2\lambda_1 x_u^2 - 2a_{uv} x_ux_v.
\end{align*}
Similarly, we have
\[
\overline{\bm{z}}^{\mathrm{T}} A(\overline{H}) \overline{\bm{z}} - \overline{\lambda}_1
= 2\overline{\lambda}_1 \overline{x}_v^2 - 2\overline{\lambda}_1 \overline{x}_u^2 + 2\overline{x}_v^2 - 2(1-a_{uv}) \overline{x}_u \overline{x}_v.
\]
Combining the above equations gives
\begin{align*}
0 
& \geq \frac{\bm{z}^{\mathrm{T}} A(H) \bm{z}}{\bm{z}^{\mathrm{T}} \bm{z}}
+ \frac{\overline{\bm{z}}^{\mathrm{T}} A(\overline{H}) \overline{\bm{z}} }{\overline{\bm{z}}^{\mathrm{T}} \overline{\bm{z}}}
- (\lambda_1 + \overline{\lambda}_1) \\
& = \frac{\lambda_1 + 2\lambda_1 x_v^2 - 2\lambda_1 x_u^2 - 2a_{uv} x_ux_v}{1 - x_u^2 + x_v^2}
+ \frac{\overline{\lambda}_1 + 2 \overline{\lambda}_1 \overline{x}_v^2 - 2\overline{\lambda}_1 \overline{x}_u^2 + 2\overline{x}_v^2 - 2(1-a_{uv}) \overline{x}_u\overline{x}_v}{1 - \overline{x}_u^2 + \overline{x}_v^2}
- (\lambda_1 + \overline{\lambda}_1) \\
& = \frac{\lambda_1 x_v^2 - \lambda_1 x_u^2 - 2a_{uv} x_ux_v}{1 - x_u^2 + x_v^2}
+ \frac{\overline{\lambda}_1 \overline{x}_v^2 - \overline{\lambda}_1 \overline{x}_u^2 + 2 \overline{x}_v^2 - 2(1-a_{uv}) \overline{x}_u \overline{x}_v}{1 - \overline{x}_u^2 + \overline{x}_v^2}.
\end{align*}
By Lemma \ref{lem:bounds-xmax-zmax}, $x_u$, $x_v$, $\overline{x}_u$ and $\overline{x}_v$ are all $O(n^{-1/2})$. Therefore,
\[
(\lambda_1 x_v^2 - \lambda_1 x_u^2) + (\overline{\lambda}_1 \overline{x}_v^2 - \overline{\lambda}_1 \overline{x}_u^2) < O\Big(\frac{1}{n}\Big).
\]
Similarly, we also have 
\[
(\lambda_1 x_u^2 - \lambda_1 x_v^2) + (\overline{\lambda}_1 \overline{x}_u^2 - \overline{\lambda}_1 \overline{x}_v^2) < O\Big(\frac{1}{n}\Big).
\]
The desired result follows from the above two inequalities.
\end{proof}

The following theorem describes the approximate structure of the extremal graph $G$ except for $o(n)$ vertices, which are the 
main results in \cite{Terpai2011}. 

\begin{theorem}[\cite{Terpai2011}]\label{thm:approximate-structure}
Let $W$ and $\overline{W}$ be the graphons of $G$ and $\overline{G}$, respectively. Then for all $(x, y) \in [0, 1]^2$,
\[
W(x,y) = 
\begin{cases}
0, & (x,y)\in [1/3, 1]^2, \\
1, & \text{otherwise},
\end{cases}
\]
and the maximum eigenvalues of $W$ and $\overline{W}$ are $\mu = \overline{\mu} = 2/3$. Furthermore, if $f,g$ are nonnegative 
unit eigenfunctions associated to $\mu$, $\overline{\mu}$ respectively, then for every $x \in [0, 1]$,
\[
f(x) = 
\begin{cases}
\sqrt{2}, & x\in [0, 1/3], \\
\sqrt{2}/2, & \text{otherwise},
\end{cases}
\hspace{3mm}
g(x) =
\begin{cases}
0, & x\in [0, 1/3], \\
\sqrt{6}/2, & \text{otherwise}.
\end{cases}
\]
\end{theorem}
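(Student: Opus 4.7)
The plan is to prove the theorem via the graph limit framework, passing from the sequence of extremal graphs to a limit graphon characterized by a variational problem, and then solving that optimization.

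\emph{Step 1 (Compactness).} Let $G_n$ be extremal on $n$ vertices and $W_n := W_{G_n}$. By Proposition \ref{prop:relation}, $p(G_n)/n = \mu(W_n) + \mu(W_{\overline{G_n}})$. Since $W_{\overline{G_n}}$ and $1 - W_n$ differ only on the $n$ diagonal blocks $I_i\times I_i$ of total measure $1/n$, the two graphon operators differ in operator norm by $O(1/n)$, giving $\mu(W_{\overline{G_n}}) = \mu(1 - W_n) + o(1)$. Lower-bounding $p(G_n)/n \geq 4/3 - o(1)$ via the complete split graph and using cut-metric compactness of $\mathcal{W}$, a subsequence of $\{W_n\}$ converges to some $W\in\mathcal{W}$. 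Proposition \ref{prop:converge} yields $\mu(W) + \mu(1-W) \geq 4/3$, and $W$ is a maximizer of $\Psi(U) := \mu(U) + \mu(1-U)$ on $\mathcal{W}$.

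\emph{Step 2 (Variational first-order condition).} Let $f, g\in\mathcal{L}^2[0,1]$ be nonnegative unit top eigenfunctions of $A_W$ and $A_{1-W}$. For any admissible symmetric perturbation $h$ (so that $W+\epsilon h\in\mathcal{W}$), Rayleigh's principle gives
\[
\Psi(W + \epsilon h) \geq \Psi(W) + \epsilon \iint h(x,y)\bigl(f(x)f(y) - g(x)g(y)\bigr) \dif x \dif y.
\]
Choosing $h\geq 0$ where $W<1$ and $h\leq 0$ where $W>0$, maximality forces $W(x,y) = 1$ when $f(x)f(y) > g(x)g(y)$ and $W(x,y) = 0$ when $f(x)f(y) < g(x)g(y)$, so $W$ is $\{0,1\}$-valued a.e., and the thresholds $\mu$, $\overline{\mu}$ of the two operators equal the claimed $2/3$ once Step 4 is carried out.

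\emph{Step 3 (Rigidity to two levels).} Define $h(x) := f(x)/g(x)\in[0,\infty]$, so $W(x,y) = \mathbf{1}[h(x)h(y) > 1]$. After a measure-preserving rearrangement (which alters neither $\mu$ nor the form of $W$), assume $h$ is nonincreasing. Then $f(x) = \mu^{-1}\int_{\{y:\, h(y) > 1/h(x)\}} f(y)\,\dif y$ depends only on $h(x)$, and likewise for $g$; write $f = F(h)$, $g = G(h)$, so that $h = F(h)/G(h)$ as a self-consistency equation. I would then argue that the range of $h$ collapses to two values: any richer level structure can be strictly improved by redistributing mass among level sets, contradicting maximality of $\Psi$.

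\emph{Step 4 (Finite computation).} With $h$ taking two values on intervals $[0,\alpha]$ and $[\alpha,1]$, $W$ is a $2\times 2$ block graphon. The $2\times 2$ eigen-systems for $A_W$ and $A_{1-W}$ depend only on $\alpha$; maximizing $\mu + \overline{\mu}$ over $\alpha\in(0,1)$ pins down $\alpha = 1/3$, $\mu = \overline{\mu} = 2/3$, and the step-function eigenfunctions of the statement (the uniqueness up to measure-preserving rearrangement is built into the equivalence underlying $\delta_{\square}$).

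\emph{Main obstacle.} Step 3 is the delicate one. The stationarity relation $h = F(h)/G(h)$ does not by itself preclude a continuum of levels for $h$; one must leverage the \emph{global} optimality of $\Psi$ at $W$, not just the first-order condition, to rule out three or more levels. A cleaner alternative is to prove directly that $\sup_{k\geq 1}\Psi_k = \Psi_2$, where $\Psi_k$ is the supremum of $\Psi$ over $k$-level symmetric block graphons, and then combine this with the cut-metric density of finite block graphons in $\mathcal{W}$ together with Proposition \ref{prop:converge} to bypass the rigidity argument entirely.
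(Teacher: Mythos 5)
You should first note that the paper does not prove this statement at all: Theorem~\ref{thm:approximate-structure} is imported verbatim from Terpai \cite{Terpai2011}, so there is no internal proof to compare against, and your proposal is in effect an attempt to reprove Terpai's theorem. Your Steps 1, 2 and 4 do follow the same graph-limit strategy as that work: compactness in the cut metric plus Proposition~\ref{prop:converge} to get a maximizer of $\Psi(U)=\mu(U)+\mu(1-U)$, a Rayleigh-quotient first-order condition forcing $W=1$ where $f(x)f(y)>g(x)g(y)$ and $W=0$ where the inequality is reversed, and a final one-parameter optimization over complete-split-type step graphons, whose computation you set up correctly (it does give $\alpha=1/3$, $\mu=\overline{\mu}=2/3$ and the stated step eigenfunctions).

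The genuine gap is Step 3, which you yourself flag, and it is not a minor missing detail: it is the entire technical core of the theorem. Two specific problems. First, the first-order argument in Step 2 says nothing on the set $\{(x,y): f(x)f(y)=g(x)g(y)\}$, which can have positive measure, so even the claim that $W$ is $\{0,1\}$-valued a.e.\ (and hence of threshold form $\mathbf{1}[h(x)h(y)>1]$) is not yet established; also $h=f/g$ needs care where $g$ vanishes, which actually happens on a set of measure $1/3$ in the extremal configuration. Second, the collapse of $h$ to two levels does not follow from the self-consistency relation $h=F(h)/G(h)$, and ``redistributing mass among level sets'' is precisely the delicate global-optimality argument that occupies most of Terpai's paper; as written it is a placeholder, not a proof. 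Your proposed fallback --- show $\sup_{k}\Psi_k=\Psi_2$ over $k$-level step graphons and invoke density of step graphons plus Proposition~\ref{prop:converge} --- does not repair this, for two reasons: (i) bounding $\Psi_k$ uniformly in $k$ is the same nonconvex optimization difficulty in another guise, and (ii) even if carried out, it would only identify the optimal \emph{value} $4/3$; it would not show that the limit graphon of the extremal sequence equals the claimed $W$ (up to measure-preserving equivalence), nor determine $f$ and $g$. That structural/uniqueness part of the statement is exactly what the present paper uses downstream (Lemma~\ref{lem:approximate-structure} feeds the explicit $W$, $f$, $g$ into the stability argument), so a value-only argument cannot substitute for it.
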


Combining Theorem \ref{thm:approximate-structure} and the arguments in \cite{BreenRiasanovksy2022} we obtain the following result.

\begin{lemma}\label{lem:approximate-structure}
The extremal graph $G$ is a complete split graph.
\end{lemma}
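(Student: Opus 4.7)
The plan is to use Lemma~\ref{lem:adjacent-iff} to recognise that $G$ is already a threshold graph, and then to combine Theorem~\ref{thm:approximate-structure} with Lemmas~\ref{lem:bounds-xmax-zmax} and \ref{lem:almost-equal} to force the threshold structure to collapse to exactly two equivalence classes of vertices, which is the same as being a complete split graph.

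Since $G$ is connected, $\bm x$ is strictly positive, so I can partition
\[
V(G) = C \sqcup I \sqcup \{w_0\}, \qquad C := \{u : x_u > \bar x_u\},\ \ I := \{u : x_u < \bar x_u\},
\]
with $|\{w_0\}| \leq 1$ (two vertices with $x=\bar x$ would give $x_u x_v = \bar x_u \bar x_v$, which is ruled out inside the proof of Lemma~\ref{lem:adjacent-iff}). Introducing $\tau_u := \bar x_u / x_u$, the adjacency rule from Lemma~\ref{lem:adjacent-iff} becomes $u \sim v \iff \tau_u \tau_v < 1$, which exhibits $G$ as a threshold graph. A short sign analysis (treating $\bar x_u=0$ separately from the case both denominators are positive) shows $G[C]$ is a clique, $G[I]$ is independent, and the possible $w_0$ is adjacent to all of $C$ and to no vertex of $I$. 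The remaining task is therefore to establish the complete bipartite join between $C$ and $I$, or equivalently to show that $\tau_u$ assumes only two distinct values across $V(G)$.

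To prove the latter, Theorem~\ref{thm:approximate-structure} together with Propositions~\ref{prop:relation} and \ref{prop:converge} yields a partition $V(G) = A \sqcup B \sqcup E$ with $|A| = n/3 + o(n)$, $|B| = 2n/3 + o(n)$, $|E| = o(n)$, such that the rescaled pair $(\sqrt{n}\,x_u,\sqrt{n}\,\bar x_u)$ equals $(\sqrt{2},0) + o(1)$ on $A$ and $(\sqrt{2}/2,\sqrt{6}/2) + o(1)$ on $B$. Lemma~\ref{lem:almost-equal}, applied with the bulk value $4/3$ read off from either of the two types, gives the uniform identity $\lambda_1 x_u^2 + \bar\lambda_1 \bar x_u^2 = 4/3 + o(1)$, so every rescaled pair lies on a common asymptotic circle of radius $\sqrt{2} + o(1)$. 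For a putative $u \in E$ with $\tau_u$ bounded away from the two bulk values $0$ and $\sqrt{3}$, the threshold rule forces $u$ to be adjacent to all of $A$ and to either all of $B$ or none of $B$, depending on whether $\tau_u < 1/\sqrt{3} - o(1)$ or $\tau_u > 1/\sqrt{3} + o(1)$. Feeding this adjacency into $\lambda_1 x_u = \sum_{v \sim u} x_v$, with the $E$-contribution bounded by $|E|\,x_{\max} = o(\sqrt{n})$ via Lemma~\ref{lem:bounds-xmax-zmax}, the bulk $A$- and $B$-sums force $x_u$ to match exactly the $A$- or $B$-value; the common-circle identity then pins $\tau_u$ to $0$ or $\sqrt{3}$, contradicting the assumed intermediateness. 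Hence $E = \emptyset$, $\tau_u$ takes only two values, and $G$ is a complete split graph.

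The main obstacle is the pointwise upgrade of Theorem~\ref{thm:approximate-structure}: graphon convergence is by construction only in the cut metric, so an exceptional set of $o(n)$ vertices is intrinsic to the statement. Ruling these out requires a careful bookkeeping of $o(1)$ errors so that, in each of the two sub-regimes $\tau_u < 1/\sqrt{3}$ and $\tau_u > 1/\sqrt{3}$, the discrete eigenvector equations combined with the exact adjacency criterion of Lemma~\ref{lem:adjacent-iff} and the quadratic invariant of Lemma~\ref{lem:almost-equal} each force $\tau_u$ back onto one of the two bulk values rather than any third intermediate one.
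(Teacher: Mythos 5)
Your plan follows essentially the same route as the paper: combine Terpai's limit description (Theorem~\ref{thm:approximate-structure}) with the adjacency criterion of Lemma~\ref{lem:adjacent-iff}, the almost-constant quadratic form of Lemma~\ref{lem:almost-equal}, and the eigenvalue equation to classify the $o(n)$ exceptional vertices, then read the complete split structure off Lemma~\ref{lem:adjacent-iff}. The opening observation that Lemma~\ref{lem:adjacent-iff} already makes $G$ a split (indeed threshold) graph is a pleasant extra the paper does not state, but it does not replace the real work, which is the join between the two classes. Two steps, however, are genuinely under-justified as written.

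First, the partition $V(G)=A\sqcup B\sqcup E$ with entrywise control of $(\sqrt{n}\,x_u,\sqrt{n}\,\overline{x}_u)$ does \emph{not} follow from Theorem~\ref{thm:approximate-structure} together with Propositions~\ref{prop:relation} and~\ref{prop:converge}: those give only the scaling identity $\lambda_1(G)=n\mu(W_G)$ and convergence of eigenvalues. To pass from cut-metric convergence of the graphons to convergence of the eigenvector entries on all but $o(n)$ vertices, one needs convergence of the top eigenfunctions themselves (after aligning $W_n$ with $W$ by measure-preserving maps), which is exactly the ingredient the paper imports from Lemma 2 of \cite{RuizChamon2020} before the Chebyshev-type argument; what you describe as ``bookkeeping of $o(1)$ errors'' is in fact this unproved analytic input. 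Second, in the regime $\tau_u>1/\sqrt{3}$ the assertion that the threshold rule forces $u$ to be adjacent to all of $A$ can fail: vertices of $A$ only satisfy $\overline{x}_v=O(\varepsilon_0 n^{-1/2})$, not $\overline{x}_v=0$, so an exceptional $u$ with $x_u=o(n^{-1/2})$ (hence $\tau_u\to\infty$, which is still ``bounded away from $0$ and $\sqrt{3}$'') may miss part of $A$, and then the computation $\lambda_1 x_u=\sum_{v\sim u}x_v$ no longer pins $x_u$ to the $B$-value. This regime must be excluded separately; it can be, e.g.\ because the crude bound $\overline{\lambda}_1\overline{x}_u\le\sum_{v\ne u}\overline{x}_v$ together with the bulk profile caps $\sqrt{n}\,\overline{x}_u$ at about $\sqrt{6}/2+O(\varepsilon_0)$, contradicting the value $\sqrt{2}+o(1)$ forced by the circle identity when $x_u=o(n^{-1/2})$. (The paper instead parametrizes an exceptional vertex by the measure $\gamma_n$ of its $\mathcal{L}_n$-neighbourhood and derives $\gamma_n(3\gamma_n-2)\approx 0$, concluding every exceptional vertex is $L$-like.) Finally, your conclusion ``$E=\emptyset$, $\tau_u$ takes only two values'' is an overstatement — the argument only shows every vertex is within $o(1)$ of one of the two bulk profiles — but that weaker statement suffices to finish via Lemma~\ref{lem:adjacent-iff}, as in the paper.
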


\begin{proof}
Let $\mu$ and $\overline{\mu}$ be the maximum eigenvalues of $W$ and $\overline{W}$, respectively. 
For every positive integer $n$, let $G_n$ denote a graph on $n$ vertices attaining 
$\max\{p(H): H\ \text{is an}\ n\text{-vertex graph}\}$, and let $W_n$ be any associated graphon 
corresponding to $G_n$. Denote by $\mu_n$, $\overline{\mu}_n$ the maximum eigenvalues
of $W_n$ and $\overline{W}_n$, respectively. By Proposition \ref{prop:converge}, 
\[
\lim_{n\to\infty} \mu(W_n) = \mu, \hspace{2mm}
\lim_{n\to\infty} \mu(\overline{W}_n) = \overline{\mu}.
\]
Furthermore, let $f_n$ be a nonnegative unit $\mu_n$-eigenfunction for $W_n$ and let $\overline{f}_n$ be a 
nonnegative unit $\overline{\mu}_n$-eigenfunction for $\overline{W}_n$. Moreover, we may apply measure-preserving 
transformations to each $W_n$ such that without loss of generality, $\|W-W_n\|_{\square}\to 0$. By Lemma 2 
of \cite{RuizChamon2020}, $f_n$ and $\overline{f}_n$ converge to $f$ and $\overline{f}$ in $\mathcal{L}^2$ sense, respectively. 

For convenience, we let $\alpha_1=\sqrt{2}$, $\beta_1=\sqrt{2}/2$, $\alpha_2=0$ and $\beta_2=\sqrt{6}/2$.
For any $\varepsilon_0 > 0$, we define 
\begin{align*}
\mathcal{S}_n & := \{x\in [0,1]: |f_n(x) - \alpha_1| < \varepsilon_0,~\, |\overline{f}_n(x) - \alpha_2| < \varepsilon_0\}, \\
\mathcal{L}_n & := \{x\in [0,1]: |f_n(x) - \beta_1| < \varepsilon_0,~\, |\overline{f}_n(x) - \beta_2| < \varepsilon_0\}, \\
\mathcal{T}_n & := [0,1]\setminus (\mathcal{S}_n \cup \mathcal{L}_n).
\end{align*}
One can show that the Lebesgue measure $m(\mathcal{T}_n)$ of $\mathcal{T}_n$ goes to zero, and $m(\mathcal{L}_n)\to 2/3$. 
Indeed, we have
\[
\int_{|f_n-f|\geq\varepsilon_0} |f_n - f|^2 \dif x \leq \int_0^1 |f_n - f|^2 \dif x \to 0,
\]
and
\[
\int_{|\overline{f}_n - \overline{f}|\geq\varepsilon_0} |\overline{f}_n - \overline{f}|^2 \dif x \leq \int_0^1 |\overline{f}_n - \overline{f}|^2 \dif x \to 0,
\]
as desired.

For all $u \in V (G_n)$, let $I_u$ be the subinterval of $[0, 1]$ corresponding to $u$ in $W_n$, and denote by 
$f_n(u)$ and $\overline{f}_n(u)$ the constant values of $f_n$ and $\overline{f}_n$ on $I_u$, respectively. For 
convenience, we define the following discrete analogues of $\mathcal{S}_n$, $\mathcal{L}_n$, $\mathcal{T}_n$:
\begin{align*}
S_n & := \{u\in V(G_n): |f_n(u) - \alpha_1| < \varepsilon_0,~\, |\overline{f}_n(u) - \alpha_2| < \varepsilon_0\}, \\
L_n & := \{u\in V(G_n): |f_n(u) - \beta_1| < \varepsilon_0,~\, |\overline{f}_n(u) - \beta_2| < \varepsilon_0\}, \\
T_n & := V(G_n)\setminus (S_n \cup L_n).
\end{align*}

For any $\varepsilon_1 > 0$. By Proposition \ref{prop:converge}, $\mu_n\to\mu$ and $\overline{\mu}_n\to\overline{\mu}$. 
It follows from Lemma \ref{lem:almost-equal} and Proposition \ref{prop:relation} that
\begin{equation}\label{eq:almost-identity}
|\mu f^2_n(u) + \overline{\mu}\, \overline{f}^2_n(u) - (\mu+\overline{\mu})| < \varepsilon_1,
\end{equation}
for sufficiently large $n$.

\begin{claim}\label{claim}
Let $\varepsilon_0' > 0$ and $n$ be sufficiently large in terms of $\varepsilon_0'$. For each $v\in T_n$, 
\[
\max\big\{\big|f_n(v) - \beta_1\big|,~~ \big|\overline{f}_n(v) - \beta_2 \big|\big\} < \varepsilon_0'.
\]
\end{claim}

\begin{proof}
Without loss of generality, we assume $G_n$ is connected. Hence, $\alpha_1 f_n(v) - \alpha_2 \overline{f}_n(v) > 0$. 
By Lemma \ref{lem:adjacent-iff}, $S_n\subseteq N_{G_n}(v)$ for sufficiently large $n$. Write 
\[ 
\mathcal{U}_n:=\bigcup_{w\in N_{G_n}(v)} I_w
\] 
and recall that $f_n$ is an eigenfunction of $W_n$ corresponding to $\mu_n$. Then
\begin{align*}
\mu_n f_n(v) 
& = \int_0^1 W_n(v, y) f_n(y) \dif y \\
& = \int_{y\in \mathcal{L}_n \cap\, \mathcal{U}_n} f_n(y) \dif y 
+ \int_{y\in \mathcal{S}_n} f_n(y) \dif y 
+ \int_{y\in \mathcal{T}_n \cap\, \mathcal{U}_n} f_n(y) \dif y.
\end{align*}
Denote the Lebesgue measure $m(\mathcal{L}_n\cap\mathcal{U}_n)$ of $\mathcal{L}_n\cap\mathcal{U}_n$ by $\gamma_n$. 
For any $\varepsilon_2 > 0$, if $n$ is sufficiently large and $\varepsilon_1$ is sufficiently small, then
\begin{equation}\label{eq:fn(v)}
\Big|f_n(v) - \frac{3\gamma_n\beta_1 + \alpha_1}{3\mu} \Big| < \varepsilon_2.
\end{equation}
Similarly, we have
\begin{align*}
\overline{\mu}_n \overline{f}_n(v) 
& = \int_0^1 \overline{W}_n(v,y) \overline{f}_n(y) \dif y \\
& = \int_{y\in \mathcal{L}_n \cap\, \overline{\mathcal{U}}_n} \overline{f}_n(y) \dif y 
+ \int_{y\in \mathcal{S}_n \cap\, \overline{\mathcal{U}}_n} \overline{f}_n(y) \dif y 
+ \int_{y\in \mathcal{T}_n \cap\, \overline{\mathcal{U}}_n} \overline{f}_n(y) \dif y,
\end{align*}
where $\overline{\mathcal{U}}_n := [0,1]\setminus\mathcal{U}_n$. Then
\begin{equation}\label{eq:overline-fn(v)}
\Big|\overline{f}_n(v) - \frac{(2-3\gamma_n)\beta_2}{3\overline{\mu}}\Big| < \varepsilon_2.
\end{equation}
Let $\varepsilon_3>0$. Combining \eqref{eq:fn(v)}, \eqref{eq:overline-fn(v)} and \eqref{eq:almost-identity}, we obtain 
\[
\Big|\mu\cdot\Big(\frac{3\gamma_n \beta_1 + \alpha_1}{3\mu}\Big)^2 + \overline{\mu}\cdot\Big(\frac{(2-3\gamma_n) \beta_2}{3\overline{\mu}}\Big)^2 - (\mu+\overline{\mu})\Big| < \varepsilon_3.
\]
Substituting the values of $\alpha_1$, $\alpha_2$, $\beta_1$, $\beta_2$ and simplifying, we deduce that
\[
|\gamma_n (3\gamma_n - 2)| < \varepsilon_3.
\]
It follows that if $n$ is sufficiently large and $\varepsilon_3$ is sufficiently small, then
\[
\min\{\gamma_n, |3\gamma_n-2|\} < \varepsilon_4
\]
for any $\varepsilon_4>0$. Combining with \eqref{eq:fn(v)} and \eqref{eq:overline-fn(v)}, we have either 
\[
\max\Big\{\Big|f_n(v) - \frac{\alpha_1}{3\mu}\Big|,~~ \Big|\overline{f}_n(v) - \frac{2\beta_2}{3\overline{\mu}}\Big|\Big\} < \varepsilon_0'
\]
or 
\[
\max\Big\{\Big|f_n(v) - \frac{\alpha_1+2\beta_1}{3\mu}\Big|,~~ |\overline{f}_n(v)|\Big\} < \varepsilon_0'.
\]
Notice that $(\alpha_1+2\beta_1)/(3\mu) = \alpha_1$ and $v\in T_n$, the second inequality does not hold.
So we obtain the desired claim by observing that $\alpha_1/(3\mu) = \beta_1$ and $2\beta_2/(3\overline{\mu}) = \beta_2$.
\end{proof}

Next, according to the definition of $S_n$ and $L_n$ and Claim \ref{claim}, we see for $n$ sufficiently large,
\begin{align*}
\max\{|f_n(v) - \alpha_1|,~~ |\overline{f}_n(v) - \alpha_2|\} < \varepsilon_0, & \hspace{3mm} v\in S_n \\
\max\{|f_n(v) - \beta_1|,~~ |\overline{f}_n(v) - \beta_2|\} < \varepsilon_0, & \hspace{3mm} v\in L_n \\
\max\{|f_n(v) - \beta_1|,~~ |\overline{f}_n(v) - \beta_2|\} < \varepsilon_0', & \hspace{3mm} v\in T_n.
\end{align*}
Let $\varepsilon_0$, $\varepsilon_0'$ be sufficiently small. Then for sufficiently large $n$ and for all $u,v\in V(G_n)$,
$f_n(u)f_n(v) > \overline{f}_n(u)\overline{f}_n(v)$ if and only if $u,v\in S_n$ or $(u,v)\in (S_n, (L_n\cup T_n)) \cup ((L_n\cup T_n), S_n)$.
Finally, using Lemma \ref{lem:adjacent-iff}, we get the required result.
\end{proof}

Now we are ready to prove Conjecture \ref{conj:NG-conjecture}. \par\vspace{2mm}

\noindent {\it Proof of Conjecture \ref{conj:NG-conjecture}.}
By Lemma \ref{lem:approximate-structure}, we assume $G = CS_{n,\omega}$. By easy algebraic computation, we find 
\[
\lambda_1(G) = \frac{\omega - 1 + \sqrt{-3\omega^2 + (4n-2)\omega + 1}}{2}.
\]
Therefore, we immediately have
\[
\lambda_1(G) + \lambda_1(\overline{G}) = \frac{\sqrt{-3\omega^2 + (4n-2)\omega + 1} - \omega}{2} + n-\frac{3}{2}.
\]
For convenience, denote $f(x) := \sqrt{-3x^2 + (4n-2)x + 1} - x$. To complete the proof, we need to determine the 
value $\omega$ maximizing $f(\omega)$. 

The following can be found in \cite{AouchicheBell2008}, we include it here for completeness. Letting $f'(x)=0$, 
we obtain 
\[
x_0 = \frac{2n-3 - \sqrt{n^2-n+1}}{3},~ 
x_1 = \frac{2n-3 + \sqrt{n^2-n+1}}{3}.
\]
It is easy to see that $f(\omega)$ attains its maximum value at $\lfloor x_0\rfloor$ or $\lceil x_0\rceil$.
Notice that 
\[
\lfloor x_0\rfloor = \frac{1}{3}
\begin{cases}
n - 3, & n \equiv 0 \hspace{-2.5mm} \pmod{3}, \\
n - 1, & n \equiv 1 \hspace{-2.5mm} \pmod{3}, \\
n - 2, & n \equiv 2 \hspace{-2.5mm} \pmod{3},   
\end{cases}
\hspace{2mm} \text{and} \hspace{2mm}
\lceil x_0\rceil = \frac{1}{3}
\begin{cases}
n, & n \equiv 0 \hspace{-2.5mm} \pmod{3}, \\
n + 2, & n \equiv 1 \hspace{-2.5mm} \pmod{3}, \\
n + 1, & n \equiv 2 \hspace{-2.5mm} \pmod{3}.   
\end{cases}
\]
Finally, we obtain the desired result by comparing $f(\lfloor x_0\rfloor)$ and $f(\lceil x_0\rceil)$. 
\hfill $\square$

\section{Connected graphs of maximum $Q$-spread}
\label{sec4}

For unit vectors $\bm{x},\bm{z}\in\mathbb{R}^n$, two basic inequalities can be obtained from the following well-known inequality for the Rayleigh quotient,
\[
q_1(G) \geq \bm{x}^{\mathrm{T}} Q(G) \bm{x} \hspace{3mm} \text{and} \hspace{3mm} 
q_n(G) \leq \bm{z}^{\mathrm{T}} Q(G) \bm{z}.
\] 
Hence, the $Q$-spread of a graph can be expressed as 
\begin{equation}\label{eq:max-min-Q-spread}
s_Q(G) = \max_{\bm{x}, \bm{z}} \sum_{uv\in E(G)} \big((x_u+x_v)^2 - (z_u+z_v)^2\big),
\end{equation}
where the maximum is taken over all unit vectors $\bm{x}$ and $\bm{z}$.

Throughout this section, we always assume that $G$ is a graph attaining maximum $Q$-spread among all connected $n$-vertex graphs.
Let $\bm{x}$ and $\bm{z}$ be the unit eigenvectors of $Q(G)$ corresponding to $q_1(G)$ and $q_n(G)$, respectively.
By the Perron--Frobenius theorem for nonnegative matrices, we may assume that $\bm{x}$ is a positive vector. We also set 
$q_1:=q_1(G)$ and $q_n:=q_n(G)$ for short.

\begin{lemma}\label{lem:q1-lower-bound}
$q_1 > 2n-5$ and $q_n < 3$.
\end{lemma}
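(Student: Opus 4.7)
The plan is to compare the extremal graph $G$ with the conjectured extremal graph $K_{n-1}^+$. Since $G$ attains the maximum $Q$-spread, we have $s_Q(G) \geq s_Q(K_{n-1}^+)$, so I would focus on establishing $s_Q(K_{n-1}^+) > 2n - 5$. Once this inequality is in hand, both conclusions of the lemma follow at once from two elementary facts: $q_n(G) \geq 0$ (the signless Laplacian is positive semidefinite) and $q_1(G) \leq 2\Delta(G) \leq 2(n-1)$ (the maximum row sum of $Q(G)$ is at most $2(n-1)$ for any simple $n$-vertex graph). Indeed, the first gives $q_1(G) \geq s_Q(G) > 2n - 5$, and the second gives $q_n(G) = q_1(G) - s_Q(G) < 2(n-1) - (2n - 5) = 3$.

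To estimate $s_Q(K_{n-1}^+)$, I would use the variational characterisations recalled in \eqref{eq:max-min-Q-spread}. Let $u$ denote the pendant vertex of $K_{n-1}^+$ and $w$ its unique neighbour. For the upper bound on $q_n$, the trial vector $\bm{z} = e_u$ yields $\bm{z}^{\mathrm{T}} Q(K_{n-1}^+) \bm{z} = d(u) = 1$, whence $q_n(K_{n-1}^+) \leq 1$. For the lower bound on $q_1$, take $\bm{x}$ proportional to the characteristic vector of $V(K_{n-1}) = V(K_{n-1}^+) \setminus \{u\}$, i.e. $x_v = 1/\sqrt{n-1}$ for $v \neq u$ and $x_u = 0$. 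A direct calculation of the degree contribution $(n-2)^2 + (n-1)$ and the adjacency contribution $2\binom{n-1}{2} = (n-1)(n-2)$ yields
\[
q_1(K_{n-1}^+) \;\geq\; \frac{(n-2)^2 + (n-1) + (n-1)(n-2)}{n-1} \;=\; 2n - 4 + \frac{1}{n-1}.
\]
Subtracting the two estimates, $s_Q(K_{n-1}^+) \geq 2n - 5 + 1/(n-1) > 2n - 5$, which is exactly what is needed.

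The whole argument is just the Rayleigh principle applied to two carefully chosen trial vectors, combined with the elementary chain $0 \leq q_n \leq q_1 \leq 2(n-1)$. There is no real obstacle; the only thing to watch is that the trial-vector estimates for $K_{n-1}^+$ leave a strictly positive margin over $2n - 5$, and this is guaranteed by the $1/(n-1)$ slack computed above. In particular, the argument works for all $n \geq 2$ and does not require any assumption of largeness of $n$, so it is a clean preliminary step before the more delicate stability analysis later in the section.
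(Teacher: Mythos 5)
Your proposal is correct and follows essentially the same route as the paper: compare the extremal $G$ with $K_{n-1}^+$, bound $q_n(K_{n-1}^+)\leq 1$ via the pendant-vertex indicator, bound $q_1(K_{n-1}^+) > 2n-4$ (the paper cites $q_1(K_{n-1}^+) > q_1(K_{n-1}) = 2(n-2)$ where you compute a Rayleigh quotient explicitly, a negligible difference), and then conclude with $0 \leq q_n$ and $q_1 \leq 2\Delta(G) \leq 2(n-1)$. No gaps; your explicit trial-vector computation even gives the small extra slack $1/(n-1)$, which the paper does not need.
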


\begin{proof}
Consider the graph $K_{n-1}^+$. Clearly, $q_1(K_{n-1}^+) > q_1(K_{n-1}) = 2(n-2)$. On the other hand, we have 
$q_n(K_{n-1}^+) \leq 1$ (see \cite{Das2010}). Indeed, we define a unit vector $\bm{y}$ for $K_{n-1}^+$ by
\[
y_u =
\begin{cases}
1, & d(u) = 1, \\
0, & \text{otherwise}.
\end{cases}
\]  
It follows that $q_n(K_{n-1}^+) \leq \bm{y}^{\mathrm{T}} Q(K_{n-1}^+) \bm{y} = 1$.
By maximality of $G$, we conclude that $s_Q(G) \geq s_Q(K_{n-1}^+) > 2n-5$.

Notice that $Q(G)$ is a positive semidefinite matrix. Hence, $q_1 \geq s_Q(G) > 2n-5$. On the other hand, 
it follows from $q_1 \leq 2\Delta(G)$ that $2n - 5 < q_1 - q_n \leq 2(n-1) - q_n$, which yields that 
$q_n < 3$.  This completes the proof of the lemma.
\end{proof}

\begin{lemma}\label{lem:size-lower-bound}
$|E(G)| > (n-1)(n-3)/2$.
\end{lemma}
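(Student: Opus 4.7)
The plan is to argue by contradiction: suppose $|E(G)| \leq (n-1)(n-3)/2$, and show that this forces $q_1(G) \leq 2n - 5$, contradicting Lemma \ref{lem:q1-lower-bound}. The strategy is to exploit a classical upper bound on $q_1(G)$ in terms of the number of edges, effectively saying that a sparse graph cannot support a large signless Laplacian spectral radius.

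The key inequality I would invoke is the signless Laplacian analogue of Das's bound,
\[
q_1(G) \leq \frac{2|E(G)|}{n-1} + n - 2,
\]
which can be derived from a standard Rayleigh-quotient argument on the matrix $Q(G) = D(G) + A(G)$. Applying this bound under the hypothesis $|E(G)| \leq (n-1)(n-3)/2$ immediately yields
\[
q_1(G) \leq \frac{(n-1)(n-3)}{n-1} + n - 2 = 2n - 5,
\]
which contradicts $q_1(G) > 2n - 5$ from Lemma \ref{lem:q1-lower-bound}. The conclusion $|E(G)| > (n-1)(n-3)/2$ then follows.

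The main obstacle is justifying or citing the upper bound on $q_1(G)$ for the signless Laplacian. If a direct reference is not convenient, a workable alternative is to use the Merris-type inequality $q_1(G) \leq \max_{v}\{d(v) + m(v)\}$ with $m(v) := d(v)^{-1} \sum_{u \sim v} d(u)$, followed by case analysis on $d(v)$. For $d(v) \leq n-4$ one has $d(v) + m(v) \leq (n-4) + (n-1) = 2n-5$ trivially using $m(v) \leq \Delta(G) \leq n-1$. For $d(v) \in \{n-3, n-2, n-1\}$, one uses $\sum_{u \sim v} d(u) \leq 2|E(G)| - d(v)$ together with the assumption $|E(G)| \leq (n-1)(n-3)/2$ to verify $d(v) + m(v) \leq 2n-5$ in each subcase, leading to the same contradiction.
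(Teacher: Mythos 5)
Your proposal is correct and follows essentially the same route as the paper: the paper also applies the bound $q_1 \leq 2|E(G)|/(n-1) + n-2$ (citing Das and Merris) together with $q_1 > 2n-5$ from Lemma \ref{lem:q1-lower-bound} to conclude $|E(G)| > (n-1)(n-3)/2$, just stated directly rather than by contradiction. Your backup Merris-type case analysis is a fine way to make the key inequality self-contained, but it is not needed beyond what the cited bound already gives.
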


\begin{proof}
By Theorem 3.1 in \cite{Das2004} and the main result in \cite{Merris1998}, we deduce that
\[
q_1 \leq \frac{2 |E(G)|}{n-1} + n-2.
\]
As shown in Lemma \ref{lem:q1-lower-bound}, $q_1 > 2n-5$. Therefore, 
\[
\frac{2 |E(G)|}{n-1} + n-2 > 2n-5,
\]
as desired.
\end{proof}

\begin{lemma}\label{lem:Q-adjacent-iff}
For any vertices $u$ and $v$, if $x_u + x_v > |z_u + z_v|$, then $u$ and $v$ are adjacent;
if $x_u + x_v < |z_u + z_v|$ and $G-uv$ is connected, then $u$ and $v$ are non-adjacent.
\end{lemma}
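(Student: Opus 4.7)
The strategy is a standard edge-switching argument leveraging the variational formulation \eqref{eq:max-min-Q-spread}. The key computational observation is that if $N_{uv}$ denotes the $n \times n$ matrix with $1$'s in positions $(u,u),(v,v),(u,v),(v,u)$ and zeros elsewhere, then $Q(G \pm uv) = Q(G) \pm N_{uv}$, and for any vector $\bm{y}\in\mathbb{R}^n$,
\[
\bm{y}^{\mathrm{T}} N_{uv}\, \bm{y} = (y_u + y_v)^2.
\]
So both sides of the lemma will be handled by substituting $\bm{x}$ and $\bm{z}$ as trial vectors into Rayleigh quotients for the perturbed graph.

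For the first implication, I would argue by contrapositive: assume $u$ and $v$ are non-adjacent and consider $G' := G + uv$. Since $G$ is connected, so is $G'$, hence $G'$ is an admissible competitor. Using $\bm{x}$ as a test vector for $q_1(G')$ and $\bm{z}$ as a test vector for $q_n(G')$, and applying the observation above,
\[
q_1(G') \geq \bm{x}^{\mathrm{T}} Q(G') \bm{x} = q_1 + (x_u+x_v)^2, \qquad q_n(G') \leq \bm{z}^{\mathrm{T}} Q(G') \bm{z} = q_n + (z_u+z_v)^2.
\]
Subtracting gives $s_Q(G') \geq s_Q(G) + (x_u+x_v)^2 - (z_u+z_v)^2$. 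Under the assumption $x_u+x_v > |z_u+z_v|$, the right-hand side strictly exceeds $s_Q(G)$, contradicting the maximality of $G$.

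For the second implication I would run exactly the same argument on $G'' := G-uv$. The hypothesis that $G-uv$ is connected is exactly what makes $G''$ an admissible competitor; this is where the connectivity assumption in the statement is forced (by contrast, $G+uv$ is automatically connected, so no such hypothesis appears in the first part). The analogous estimates give $q_1(G'') \geq q_1 - (x_u+x_v)^2$ and $q_n(G'') \leq q_n - (z_u+z_v)^2$, so
\[
s_Q(G'') \geq s_Q(G) + (z_u+z_v)^2 - (x_u+x_v)^2,
\]
which exceeds $s_Q(G)$ whenever $|z_u+z_v| > x_u+x_v$, giving the contradiction. The absolute value on $|z_u+z_v|$ and the comparison after squaring are natural because $\bm{x}$ is Perron and hence non-negative, while $\bm{z}$ can have mixed signs. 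There is no real obstacle here: the whole proof is the routine extremal swap, and the only mild care needed is verifying that the competitor graph is connected so that extremality applies.
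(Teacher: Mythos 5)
Your proposal is correct and follows essentially the same route as the paper: the paper's proof is precisely the extremal swap via the variational formula \eqref{eq:max-min-Q-spread}, adding $uv$ when $x_u+x_v>|z_u+z_v|$ and deleting it when the inequality is reversed, with the connectivity of $G-uv$ needed so that the competitor remains admissible. Your explicit perturbation identity $\bm{y}^{\mathrm{T}}N_{uv}\bm{y}=(y_u+y_v)^2$ is just an unpacking of that same formula, so there is nothing substantively different to flag.
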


\begin{proof}
By \eqref{eq:max-min-Q-spread}, if $x_u + x_v > |z_u + z_v|$ and $u$, $v$ are non-adjacent, then
\[
s_Q(G+uv) - s_Q(G) \geq (x_u + x_v)^2 - (z_u + z_v)^2 > 0,
\]
a contradiction implies that $u$ and $v$ are adjacent. Likewise, if $x_u + x_v < |z_u + z_v|$ and $u$, $v$ 
are adjacent, then 
\[
s_Q(G - uv) - s_Q(G) \geq (z_u + z_v)^2 - (x_u + x_v)^2 > 0,
\]
a contradiction implies that $u$ and $v$ are non-adjacent. 
\end{proof}

\begin{lemma}\label{lem:x-max}
For each $v\in V(G)$, $x_v < \frac{\sqrt{n}}{n-3}$.
\end{lemma}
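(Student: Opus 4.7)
The plan is to combine the Perron eigenvalue equation for $Q(G)$ at coordinate $v$ with a sharp Cauchy--Schwarz bound, and then invoke the lower bound $q_1 > 2n-5$ from Lemma \ref{lem:q1-lower-bound} to close up the estimate.

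Concretely, for each $v \in V(G)$ I would first read off the eigenvalue equation
\[
(q_1 - d(v))\,x_v = \sum_{u \sim v} x_u,
\]
which is meaningful because $q_1 > 2n-5 \geq n-1 \geq d(v)$ for $n$ large. Since $v \notin N(v)$ and $\bm{x}$ is a unit vector, $\sum_{u\sim v} x_u^2 \leq 1 - x_v^2$, so Cauchy--Schwarz gives
\[
(q_1 - d(v))\,x_v \;\leq\; \sqrt{d(v)(1 - x_v^2)}.
\]
Squaring and solving for $x_v^2$ yields
\[
x_v^2 \;\leq\; \frac{d(v)}{(q_1 - d(v))^2 + d(v)}.
\]
A one-line derivative computation shows that the function $d \mapsto d/[(q_1-d)^2 + d]$ is increasing on $[0, q_1)$, so the right-hand side is maximized at $d(v) = n-1$. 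Using $q_1 - (n-1) > n-4$ from Lemma \ref{lem:q1-lower-bound}, this gives
\[
x_v^2 \;<\; \frac{n-1}{(n-4)^2 + (n-1)} \;=\; \frac{n-1}{n^2 - 7n + 15}.
\]
The proof closes by the arithmetic comparison $(n-1)(n-3)^2 < n(n^2 - 7n + 15)$, which simplifies to $-9 < 0$; hence the right-hand side above is strictly less than $n/(n-3)^2$, yielding $x_v < \sqrt{n}/(n-3)$.

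No step is genuinely hard; the only subtlety worth flagging is that the cruder Cauchy--Schwarz bound $\sum_{u\sim v} x_u \leq \sqrt{d(v)}$ (discarding the $1 - x_v^2$ factor) would give only $x_v < \sqrt{n-1}/(n-4)$, and a short check shows $\sqrt{n-1}/(n-4) > \sqrt{n}/(n-3)$ for $n \geq 4$. Thus retaining the $1 - x_v^2$ factor inside Cauchy--Schwarz is essential for landing on the stated constant $\sqrt{n}/(n-3)$.
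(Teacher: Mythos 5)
Your proof is correct, but it follows a different (slightly longer) route than the paper's. Both arguments start from the eigenvalue equation $(q_1-d(v))x_v=\sum_{u\in N(v)}x_u$ and feed in $q_1>2n-5$ from Lemma \ref{lem:q1-lower-bound} together with $d(v)\le n-1$, but they apply Cauchy--Schwarz differently. The paper bounds the neighborhood sum via the $\ell_1$-norm: $\sum_{u\in N(v)}x_u\le\sum_{u\ne v}x_u\le\sqrt{n}-x_v$, which keeps everything linear in $x_v$ and gives $(2n-5)x_v< q_1x_v\le (n-2)x_v+\sqrt{n}$, hence $(n-3)x_v<\sqrt{n}$ in one line. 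You instead apply Cauchy--Schwarz over $N(v)$ with the refinement $\sum_{u\sim v}x_u^2\le 1-x_v^2$, which produces a quadratic inequality in $x_v$ that you then optimize over $d(v)$ (your monotonicity computation $g'(d)=(q_1-d)(q_1+d)/[(q_1-d)^2+d]^2>0$ is right) and close with the arithmetic comparison $(n-1)(n-3)^2=n(n^2-7n+15)-9$. Each step checks out, including the implicit requirement $d(v)\le n-1<q_1$ for large $n$, so you land on exactly the stated constant $\sqrt{n}/(n-3)$; your closing remark is also accurate that the cruder bound $\sum_{u\sim v}x_u\le\sqrt{d(v)}$ only yields $\sqrt{n-1}/(n-4)$, which is too weak. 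What your version buys is a pointwise bound $x_v^2\le d(v)/[(q_1-d(v))^2+d(v)]$ that is sharper for low-degree vertices; what the paper's version buys is brevity, since the $\ell_1$ trick avoids both the squaring and the monotonicity step.
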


\begin{proof}
By eigenvalue equation and Cauchy--Schwarz inequality we have
\begin{align*}
q_1 x_v 
& = d(v) x_v + \sum_{u\in N(v)} x_u \\
& \leq (d(v) - 1) x_v + \sqrt{n} \\
& \leq (n-2) x_v + \sqrt{n},
\end{align*}
which, together with Lemma \ref{lem:q1-lower-bound}, gives the desired result.
\end{proof}

Fix a sufficiently small constant $\varepsilon > 0$, whose value will be chosen later. Let
\[
S:=\Big\{v\in V(G): |z_v| < \frac{\varepsilon}{\sqrt{n}}\Big\}, \hspace{3mm}
T:= \Big\{v\in V(G): x_v < \frac{1}{2\sqrt{n}}\Big\},
\]
and $L:= V(G)\setminus S$.

\begin{lemma}\label{lem:T-size}
$|T| <8$.
\end{lemma}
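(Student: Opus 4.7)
The plan is to derive $|T|<8$ directly from the Perron normalization $\sum_{v} x_v^2 = 1$ combined with the uniform pointwise upper bound on $x_v$ established in Lemma \ref{lem:x-max}. The heuristic is that if too many entries of $\bm{x}$ were below $1/(2\sqrt{n})$, then even in the best case where every remaining entry saturates the bound from Lemma \ref{lem:x-max}, the sum $\sum_v x_v^2$ would fall short of $1$.

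Concretely, I would split the unit-norm identity as
\[
1 \;=\; \sum_{v \in T} x_v^2 \;+\; \sum_{v \in V(G)\setminus T} x_v^2
\]
and bound each piece. By the definition of $T$ we have $x_v^2 < 1/(4n)$ for $v \in T$, so the first sum is less than $|T|/(4n)$. For the complement, Lemma \ref{lem:x-max} gives $x_v^2 < n/(n-3)^2$, so the second sum is less than $(n-|T|)\,n/(n-3)^2$. Combining, we obtain
\[
1 \;<\; \frac{|T|}{4n} + \frac{(n-|T|)\,n}{(n-3)^2}.
\]

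Clearing denominators reduces this inequality to $|T|(n+3)(n-1) < 4n(2n-3)$, and the short computation $8(n+3)(n-1) - 4n(2n-3) = 28n - 24 > 0$ shows that the resulting upper bound $|T| < 4n(2n-3)/((n+3)(n-1))$ is itself strictly less than $8$ for every $n \geq 1$. Since $|T|$ is an integer this yields $|T| \leq 7 < 8$, as required. The argument is purely algebraic and presents no real obstacle; the only subtle point is that the two numerical thresholds $1/(2\sqrt{n})$ and $\sqrt{n}/(n-3)$ combine just sharply enough to force the constant on the right-hand side below $8$.
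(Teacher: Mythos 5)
Your proof is correct and follows essentially the same route as the paper: split the normalization $\sum_v x_v^2=1$ over $T$ and its complement, bound the two pieces using the definition of $T$ and Lemma \ref{lem:x-max}, and solve the resulting inequality for $|T|$. The paper leaves the final algebra implicit, while you carry it out explicitly; the only trivial caveat is that the computation presumes $n>3$ (so that $\sqrt{n}/(n-3)$ is a valid bound), which is harmless since $n$ is taken sufficiently large.
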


\begin{proof}
Since $\bm{x}$ is a unit vector, it follows from Lemma \ref{lem:x-max} that 
\[
1 = \sum_{u\in T} x_u^2 + \sum_{u\in V(G)\setminus T} x_u^2 < \frac{|T|}{4n} + (n - |T|) \cdot \frac{n}{(n-3)^2},
\]
yielding $|T| < 8$, as desired.
\end{proof}

\begin{lemma}\label{lem:unique-vetex-o(n)}
There is exactly one vertex with degree $o(n)$.
\end{lemma}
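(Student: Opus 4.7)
I plan to prove the lemma in two directions: ``at most one'' vertex has degree $o(n)$, and ``at least one'' does.

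\textbf{At most one.} Suppose toward a contradiction that two distinct vertices $u, v \in V(G)$ both satisfy $d_G(u), d_G(v) = o(n)$. Then $G \setminus \{u, v\}$ has $n - 2$ vertices and at most $\binom{n-2}{2} = (n-2)(n-3)/2$ edges. Deleting $u, v$ removes at most $d_G(u) + d_G(v) = o(n)$ edges, so by Lemma \ref{lem:size-lower-bound},
\[
\frac{(n-2)(n-3)}{2} \;\geq\; |E(G)| - d_G(u) - d_G(v) \;>\; \frac{(n-1)(n-3)}{2} - o(n),
\]
which rearranges to $o(n) > (n-3)/2$, impossible for $n$ sufficiently large.

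\textbf{At least one.} Suppose toward a contradiction that $\delta(G) \geq \eta n$ for some fixed $\eta > 0$. Writing $Q(G) = Q(K_n) - Q(\overline{G})$ and applying Weyl's inequality with the standard bound $q_1(\overline{G}) \leq 2\Delta(\overline{G})$,
\[
q_n(G) \;\geq\; q_n(K_n) - q_1(\overline{G}) \;\geq\; (n-2) - 2(n - 1 - \delta(G)) \;=\; 2\delta(G) - n \;\geq\; (2\eta - 1) n.
\]
For any $\eta > 1/2$, this forces $q_n(G) \to \infty$, contradicting $q_n < 3$ from Lemma \ref{lem:q1-lower-bound}. So $\delta(G) \leq n/2 + O(1)$.

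\textbf{The main obstacle} is to sharpen $\delta(G) \leq n/2$ all the way down to $\delta(G) = o(n)$. I expect this to require a finer analysis of the eigenvector $\bm{z}$: the identity $q_n = \sum_{uv \in E(G)}(z_u + z_v)^2 < 3$ combined with $\|\bm{z}\|_2 = 1$ forces $\bm{z}$ to behave approximately as a $\pm \alpha$ sign vector, with only $O(1)$ ``defect'' edges within each sign class. Contrasting $|E(G)| > (n-1)(n-3)/2 \sim n^2/2$ with the bipartite bound $|V_1||V_2| \leq n^2/4$ shows the $\bm{z}$-induced bipartition must be highly unbalanced, forcing one class to have constant size. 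A vertex in the tiny class, analyzed via Lemma \ref{lem:Q-adjacent-iff} and combined with the bound $|T| < 8$ from Lemma \ref{lem:T-size}, should then have degree $o(n)$, completing the proof.
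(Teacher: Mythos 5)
Your ``at most one'' half is fine and matches the paper's (it is an immediate consequence of Lemma \ref{lem:size-lower-bound}), and your Weyl-inequality step correctly shows $\delta(G)\leq n/2+O(1)$. But the heart of the lemma --- producing a vertex of degree $o(n)$, i.e.\ ruling out that \emph{every} vertex has degree $\Omega(n)$ --- is exactly the part you leave as ``the main obstacle,'' so the proposal is not a proof. Moreover, the sketch you give for closing the gap rests on a false premise: $q_n=\sum_{uv\in E(G)}(z_u+z_v)^2<3$ together with $\|\bm{z}\|_2=1$ does \emph{not} force $\bm{z}$ to look like a $\pm\alpha$ sign vector. Entries of $\bm{z}$ that are individually small make $(z_u+z_v)^2$ small regardless of signs, so $\bm{z}$ could a priori be concentrated on a handful of vertices; indeed, for the conjectured extremal graph $K_{n-1}^+$ the $q_n$-eigenvector is essentially the indicator of the pendant vertex, not a two-valued sign vector. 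Consequently the ``unbalanced bipartition'' argument you outline cannot get started, and the concentrated-vector scenario is precisely the case your plan does not address.

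The missing idea (and the paper's route) is to bring in the eigenvalue \emph{equation}, not just the quadratic form, at the vertex $u$ maximizing $|z_u|$, under the assumption $d(v)=\Omega(n)$ for all $v$. Since $|z_u|\geq n^{-1/2}$ and $q_n<3$ (Lemma \ref{lem:q1-lower-bound}), the relation $(q_n-d(u))z_u=\sum_{w\in N(u)}z_w$ forces $\big|\sum_{w\in N(u)}z_w\big|=\Omega(\sqrt{n})$, and a Cauchy--Schwarz split over $L=\{v:|z_v|\geq\varepsilon/\sqrt{n}\}$ then shows $|L|=\Omega(n)$; so some sign class $B\subseteq L$ has $\Omega(n)$ vertices. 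Because $G$ misses only $O(n)$ edges (Lemma \ref{lem:size-lower-bound}), $B$ spans $\Omega(n^2)$ edges, each contributing at least $4\varepsilon^2/n$ to $q_n=\sum_{uv\in E(G)}(z_u+z_v)^2\geq 4\sum_{uv\in E(B)}z_uz_v$, giving $q_n=\Omega(n)$, a contradiction. Without some argument of this type --- one that rules out $\bm{z}$ concentrating on high-degree vertices --- your outline cannot be completed.
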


\begin{proof}
By Lemma \ref{lem:size-lower-bound}, we know that there exists at most one vertex with degree $o(n)$.
Hence, it suffices to rule out the case that $d(v)=\Omega(n)$ for each $v\in V(G)$. 

Let $u$ be a vertex such that $|z_u| = \max\{|z_w|: w\in V(G)\}$. Without loss of generality, we assume 
$z_u < 0$. Consider the following eigenvalue equation with respect to vertex $u$,
\[
(q_n - d(u)) z_u = \sum_{w\in N(u)} z_w.
\]
Noting that $q_n < 3$, $d(u)=\Omega(n)$ and $|z_u| \geq n^{-1/2}$, we have 
\[
\sum_{w\in N(u)} z_w = (q_n - d(u)) z_u > \Omega(\sqrt{n}).
\]
On the other hand, by Cauchy--Schwarz inequality we find that
\begin{align*}
\sum_{w\in N(u)} z_w 
& \leq \sum_{w\in N(u)\, \cap L} |z_w| + \sum_{w\in N(u) \setminus L} |z_w| \\
& \leq \sqrt{|L|} + \frac{\varepsilon}{\sqrt{n}} (n - |L|) \\
& \leq \sqrt{|L|} + \varepsilon\sqrt{n},
\end{align*}
which implies that $|L| > \Omega(n)$ for sufficiently small $\varepsilon$. Thus, one of the following two sets
\[
B:= \Big\{v\in L: z_v > 0\Big\},~~
C:= \Big\{v\in L: z_v < 0\Big\}
\]
(say $B$) has size $\Omega(n)$. Recall that $\bm{z}$ is a unit eigenvector corresponding to $q_n$. Hence,
\begin{align*}
q_n 
& = \sum_{uv\in E(G)} (z_u + z_v)^2 \\
& = \sum_{uv\in E(G)} (z_u^2 + z_v^2) + 2\sum_{uv\in E(G)} z_u z_v \\
& \geq 2 \sum_{uv\in E(G)} (|z_u z_v| + z_u z_v) \\
& = 2 \sum_{uv\in E(G),\, z_u z_v>0} (|z_u z_v| + z_u z_v) \\
& \geq 2 \sum_{uv\in E(B)} (|z_u z_v| + z_u z_v).
\end{align*}
By Lemma \ref{lem:size-lower-bound}, $|E(B)| > \Omega(n^2)$. It follows that $q_n > \Omega(n)$,
a contradiction completing the proof of Lemma \ref{lem:unique-vetex-o(n)}.
\end{proof}

From now on, we assume $w$ is the unique vertex such that $d(w) = o(n)$. 

\begin{lemma}\label{lem:x-Theta-sqrt-n}
For each $v\in V(G)\setminus \{w\}$, $x_v = \Theta(n^{-1/2})$.
\end{lemma}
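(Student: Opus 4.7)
The upper bound $x_v = O(n^{-1/2})$ is immediate from Lemma \ref{lem:x-max}, which already gives $x_v < \sqrt{n}/(n-3)$ for every $v \in V(G)$. The whole content of the claim is therefore the matching lower bound $x_v = \Omega(n^{-1/2})$ for $v \neq w$, and I would extract this directly from the eigenvalue equation for $Q(G)$ at $v$.

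Writing the eigenvalue equation at $v$ and isolating $x_v$ gives
\[
x_v = \frac{1}{q_1 - d(v)} \sum_{u \in N(v)} x_u,
\]
where $q_1 \geq d(v)$ because $\bm{x}$ is positive by Perron--Frobenius. The plan is then to bound the denominator from above by $O(n)$ and the numerator from below by $\Omega(\sqrt{n})$. For the denominator, the standard inequality $q_1 \leq 2\Delta(G) \leq 2(n-1)$ (already invoked in Lemma \ref{lem:q1-lower-bound}) yields $q_1 - d(v) \leq 2(n-1)$. For the numerator, Lemma \ref{lem:unique-vetex-o(n)} identifies $w$ as the only vertex of degree $o(n)$, so there exists a constant $c_0 > 0$ with $d(v) \geq c_0 n$ for all $v \neq w$ and all sufficiently large $n$. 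Since Lemma \ref{lem:T-size} gives $|T| < 8$, at most seven neighbors of $v$ lie in $T$, whence
\[
\sum_{u \in N(v)} x_u \geq (d(v) - 7) \cdot \frac{1}{2\sqrt{n}} \geq \frac{c_0 n - 7}{2\sqrt{n}} = \Omega(\sqrt{n}).
\]
Dividing then gives $x_v \geq \Omega(\sqrt{n})/(2(n-1)) = \Omega(n^{-1/2})$, as required.

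The only mildly delicate point is the uniform lower bound $d(v) \geq c_0 n$ used above. Lemma \ref{lem:unique-vetex-o(n)} is phrased asymptotically, but if no such constant $c_0$ existed one could pass to a subsequence of extremal graphs along which some $v_n \neq w_n$ has $d(v_n)/n \to 0$; then $G_n$ would contain \emph{two} vertices of degree $o(n)$, contradicting the ``at most one'' half of Lemma \ref{lem:unique-vetex-o(n)} (itself an immediate consequence of the edge-count in Lemma \ref{lem:size-lower-bound}). Everything else is a short calculation, so I expect this to be the only step that requires care.
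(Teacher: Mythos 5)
Your proposal is correct and follows essentially the same route as the paper: the upper bound from Lemma \ref{lem:x-max}, and the lower bound via the eigenvalue equation at $v$, discarding the at most seven neighbors in $T$ (Lemma \ref{lem:T-size}), using $d(v)=\Omega(n)$ from Lemma \ref{lem:unique-vetex-o(n)}, and bounding $q_1-d(v)$ by $O(n)$. The only (harmless) differences are that the paper keeps the inequality in the form $(q_1-d(v))x_v \geq \Omega(\sqrt{n})$ together with $(q_1-d(v))x_v < 2nx_v$ instead of dividing, and your remark on extracting a uniform constant $c_0$ from the asymptotic statement is a reasonable extra precaution the paper leaves implicit.
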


\begin{proof}
Let $v$ be any vertex in $V(G)\setminus \{w\}$. Using the eigenvalue equation with respect to vertex $v$, we deduce that
\[
(q_1 - d(v)) x_v 
\geq \sum_{u\in N(v)\setminus T} x_u \geq \frac{d(v) - 8}{2\sqrt{n}} 
> \Omega(\sqrt{n}).
\]
The second inequality follows from Lemma \ref{lem:T-size}, and the last inequality is due to 
Lemma \ref{lem:unique-vetex-o(n)}. On the other hand, $(q_1 - d(v)) x_v < 2n x_v$. Therefore, 
$x_v > \Omega(n^{-1/2})$. Combining with Lemma \ref{lem:x-max} we get the desired result.
\end{proof}

\begin{lemma}\label{lem:z-o-sqrt-n}
For each $v\in V(G)\setminus \{w\}$, $|z_v| = o(n^{-1/2})$.
\end{lemma}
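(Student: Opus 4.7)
The plan is to control $|z_v|$ for $v\neq w$ through a two-level argument: a sharp bound for vertices whose non-neighborhood is small, followed by a bootstrap for the exceptional vertices whose non-neighborhood is large. The common catalyst is a uniform control on the global sum $\Sigma:=\sum_{u\in V(G)} z_u$.

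First I would extract $|\Sigma|=O(1)$ from the identity obtained by left-multiplying $Q(G)\bm{z}=q_n\bm{z}$ by $\bm{1}^{\mathrm T}$. Since $\bm{1}^{\mathrm T} Q(G)=2\bm{d}^{\mathrm T}$, writing $d(v)=(n-1)-\bar d(v)$ with $\bar d(v):=n-1-d(v)$ rearranges this to $(2(n-1)-q_n)\Sigma=2\sum_{v}\bar d(v)z_v$. By Lemma \ref{lem:size-lower-bound} one has $\sum_{v}\bar d(v)=n(n-1)-2|E(G)|<3n$, and combined with $|z_v|\leq 1$ and $q_n<3$ from Lemma \ref{lem:q1-lower-bound}, this yields $|\Sigma|\leq 6n/(2n-5)=O(1)$.

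Next, for every $v\neq w$ I would rewrite the eigenvalue equation at $v$ in the form
\[
(d(v)-q_n-1)\,z_v=\sum_{u\in B(v)}z_u-\Sigma,\qquad B(v):=V(G)\setminus(N(v)\cup\{v\}),
\]
so that $|B(v)|=\bar d(v)$, and Cauchy--Schwarz gives $|\sum_{u\in B(v)}z_u|\leq\sqrt{\bar d(v)}$. If $\bar d(v)\leq n^{1/4}$ then $d(v)-q_n-1\geq n/2$ for $n$ large, and
\[
|z_v|\leq\frac{|\Sigma|+n^{1/8}}{n/2}=O(n^{-7/8})=o(n^{-1/2}).
\]
Applied more crudely with $\sqrt{\bar d(v)}\leq\sqrt{n-1}$ and $d(v)=\Omega(n)$ from Lemma \ref{lem:unique-vetex-o(n)}, the same inequality yields the universal bound $|z_v|=O(n^{-1/2})$ for every $v\neq w$, which seeds the bootstrap.

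Finally, I would bootstrap to handle those $v\neq w$ with $\bar d(v)>n^{1/4}$. Markov applied to $\sum_v \bar d(v)<3n$ shows there are fewer than $3n^{3/4}$ such vertices in total. For such $v$, I split $N(v)$ into neighbors $u\neq w$ with $\bar d(u)\leq n^{1/4}$ (to which the sharp $O(n^{-7/8})$ bound applies), neighbors $u\neq w$ with $\bar d(u)>n^{1/4}$ (fewer than $3n^{3/4}$ of them, each bounded by $O(n^{-1/2})$), and possibly $w$ itself. Combining these bounds yields
\[
\Big|\sum_{u\in N(v)}z_u\Big|\leq n\cdot O(n^{-7/8})+3n^{3/4}\cdot O(n^{-1/2})+1=O(n^{1/4}),
\]
and dividing by $|q_n-d(v)|=\Omega(n)$ gives $|z_v|=O(n^{-3/4})=o(n^{-1/2})$. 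The main obstacle is closing the bootstrap: the threshold $n^{1/4}$ is chosen precisely so that the number of exceptional neighbors ($O(n^{3/4})$) times their crude per-vertex bound ($O(n^{-1/2})$) is $O(n^{1/4})=o(\sqrt n)$, which after dividing by $\Omega(n)$ yields the required $o(n^{-1/2})$.
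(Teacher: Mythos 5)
Your proof is correct, but it follows a genuinely different route from the paper. The paper argues by contradiction: if some $v\neq w$ had $|z_v|>c/\sqrt{n}$, then the eigenvalue equation at $v$ (with $d(v)=\Omega(n)$ from Lemma \ref{lem:unique-vetex-o(n)} and $q_n<3$) forces $\big|\sum_{u\in N(v)}z_u\big|=\Omega(\sqrt{n})$, which in turn forces the threshold set $L=\{u:|z_u|\geq\varepsilon/\sqrt{n}\}$ to have size $\Omega(n)$; then, exactly as in the proof of Lemma \ref{lem:unique-vetex-o(n)}, near-completeness (Lemma \ref{lem:size-lower-bound}) makes one sign class of $L$ span $\Omega(n^2)$ edges and the quadratic form $q_n=\sum_{uv\in E}(z_u+z_v)^2$ blows up to $\Omega(n)$, contradicting $q_n<3$. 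You instead work directly: the row-sum identity $\bm{1}^{\mathrm T}Q\bm{z}=q_n\bm{1}^{\mathrm T}\bm{z}$ plus the non-edge count from Lemma \ref{lem:size-lower-bound} gives $|\sum_u z_u|=O(1)$, the eigen-equation rewritten through the non-neighborhood plus Cauchy--Schwarz gives $|z_v|=O(n^{-7/8})$ when $\bar d(v)\leq n^{1/4}$ and $O(n^{-1/2})$ in general for $v\neq w$, and a Markov count of the at most $3n^{3/4}$ vertices with many non-neighbors closes a bootstrap giving $O(n^{-3/4})$ for those. Both arguments lean on the same inputs ($q_n<3$, near-completeness, and $d(v)=\Omega(n)$ for $v\neq w$), but your version buys explicit polynomial decay rates rather than a bare $o(n^{-1/2})$, avoids re-running the $q_n=\Omega(n)$ contradiction machinery, and sidesteps the slightly delicate interplay in the paper between the fixed threshold $\varepsilon$ defining $L$ and the arbitrary constant $c$ in the contradiction hypothesis; the paper's route, on the other hand, recycles an argument already set up for Lemma \ref{lem:unique-vetex-o(n)} and so is shorter on the page.
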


\begin{proof}
Suppose on the contrary that there exists a vertex $v\in V(G)\setminus \{w\}$ such that $|z_v|> c/\sqrt{n}$ for some $c>0$.
By eigenvalue equation and Lemma \ref{lem:unique-vetex-o(n)}, 
\begin{align*}
\bigg|\sum_{u\in N(v)} z_u \bigg| = |(q_n - d(v)) z_v| > \Omega(\sqrt{n}).
\end{align*}
On the other hand, we see
\[
\bigg|\sum_{u\in N(v)} z_u \bigg|
\leq \sum_{u\in N(v)\, \cap L} |z_w| + \sum_{u\in N(v) \setminus L} |z_u| 
\leq \sqrt{|L|} + \varepsilon\sqrt{n},
\]
which yields that $|L| > \Omega(n)$. Using similar arguments in the proof of Lemma \ref{lem:unique-vetex-o(n)}, we have 
$q_n > \Omega(n)$, a contradiction completing the proof.
\end{proof}

Now we are in a position to confirm the validity of the first part of Conjecture \ref{conj:Q-spectral-spread-conjecture}.
\par\vspace{2mm}

\noindent {\it Proof of Conjecture \ref{conj:Q-spectral-spread-conjecture}.}
By Lemma \ref{lem:x-Theta-sqrt-n} and Lemma \ref{lem:z-o-sqrt-n}, for any $u,v\in V(G)\setminus \{w\}$ we have 
$x_u + x_v > |z_u + z_v|$. Combining with Lemma \ref{lem:Q-adjacent-iff}, we deduce that the induce subgraph 
$G[V(G)\setminus\{w\}]$ is a complete graph. To complete the proof, we show that $d(w)=1$. Indeed,
by Lemma \ref{lem:z-o-sqrt-n} again, 
\[ 
z_w^2 = 1 - \sum_{u\in V(G)\setminus \{w\}} z_u^2 = 1 - o(1).
\]
If $d(w) \geq 2$, then deleting one edge between $w$ and $V(G)\setminus \{w\}$ will increase the $Q$-spread by 
Lemma \ref{lem:Q-adjacent-iff}, a contradiction completing the proof. \hfill $\square$

\end{document}